\newtheorem{theorem}{Theorem}
\theoremstyle{plain}
\newtheorem{corollary}{Corollary}
\newtheorem{definition}{Definition}
\newtheorem{lemma}{Lemma}
\newtheorem{proposition}{Proposition}
\newtheorem{remark}{Remark}
\numberwithin{equation}{section}
\begin{document}
\title[Generalization of some integral inequalities]{New estimates on
generalization of some integral inequalities for $s-$convex functions and
their applications}
\author{\.{I}mdat \.{I}\c{s}can}
\address{Department of Mathematics, Faculty of Arts and Sciences,\\
Giresun University, 28100, Giresun, Turkey.}
\email{imdat.iscan@giresun.edu.tr}
\date{July 21, 2012}
\subjclass[2000]{26A51, 26D15}
\keywords{Hermite-Hadamard inequality, Simpson type inequality, Ostrowski
type inequality, trapezoid type inequality, midpoint type inequality, $s-$%
convex function, convex function.}

\begin{abstract}
In this paper, a new identity for differentiable functions is derived. Thus
we can obtain new estimates on generalization of Hadamard,Ostrowski and
Simpson type inequalities for functions whose derivatives in absolute value
at certain power are $s-$convex (in the second sense). Some applications to
special means of real numbers are also given.
\end{abstract}

\maketitle

\section{Introduction}

Let $f:I\subseteq \mathbb{R\rightarrow R}$ be a convex function defined on
the interval $I$ of real numbers and $a,b\in I$ with $a<b$. The following
inequality%
\begin{equation}
f\left( \frac{a+b}{2}\right) \leq \frac{1}{b-a}\dint\limits_{a}^{b}f(x)dx%
\leq \frac{f(a)+f(b)}{2}\text{.}  \label{1-1}
\end{equation}

holds. This double inequality is known in the literature as Hermite-Hadamard
integral inequality for convex functions \cite{DP00}. Note that some of the
classical inequalities for means can be derived from (\ref{1-1}) for
appropriate particular selections of the mapping $f$. Both inequalities hold
in the reversed direction if f is concave.

Let $f:I\subseteq \mathbb{R\rightarrow R}$ be a mapping differentiable in $%
I^{\circ },$ the interiorof I, and let $a,b\in I^{\circ }$ with $a<b.$ If $%
\left\vert f^{\prime }(x)\right\vert \leq M,$ $x\in \left[ a,b\right] ,$
then we the following inequality holds%
\begin{equation*}
\left\vert f(x)-\frac{1}{b-a}\dint\limits_{a}^{b}f(t)dt\right\vert \leq 
\frac{M}{b-a}\left[ \frac{\left( x-a\right) ^{2}+\left( b-x\right) ^{2}}{2}%
\right]
\end{equation*}%
for all $x\in \left[ a,b\right] .$ The constant $\frac{1}{4}$ is the best
possible in the sense that it cannot be replaced by a smaller one. This
result is known in the literature as the Ostrowski inequality \cite{DR02}.

The following inequality is well known in the literature as Simpson's
inequality .

Let $f:\left[ a,b\right] \mathbb{\rightarrow R}$ be a four times
continuously differentiable mapping on $\left( a,b\right) $ and $\left\Vert
f^{(4)}\right\Vert _{\infty }=\underset{x\in \left( a,b\right) }{\sup }%
\left\vert f^{(4)}(x)\right\vert <\infty .$ Then the following inequality
holds:%
\begin{equation*}
\left\vert \frac{1}{3}\left[ \frac{f(a)+f(b)}{2}+2f\left( \frac{a+b}{2}%
\right) \right] -\frac{1}{b-a}\dint\limits_{a}^{b}f(x)dx\right\vert \leq 
\frac{1}{2880}\left\Vert f^{(4)}\right\Vert _{\infty }\left( b-a\right) ^{2}.
\end{equation*}%
\ \qquad In recent years many authors have studied error estimations for
Simpson's inequality; for refinements, counterparts, generalizations and new
Simpson's type inequalities, see \cite{ADD09,SA11,SSO10,SSO10a} and therein.

In \cite{B78}, Breckner introduced s-convex functions as a generalization of
convex functions as follows:

\begin{definition}
Let $s\in (0,1]$ be a fixed real number. A function $f:[0,\infty
)\rightarrow \lbrack 0,\infty )$ is said to be $s-$convex (in the second
sense),or that $f$ belongs to the class $K_{s}^{2}$, if \ 
\begin{equation}
f\left( tx+(1-t)y\right) \leq t^{s}f(x)+(1-t)^{s}f(y)  \label{1-2}
\end{equation}
\end{definition}

for all $x,y\in \lbrack 0,\infty )$ and $t\in \lbrack 0,1]$.

If inequality (\ref{1-2}) is reversed, then $f$ is said to be $s-$concave
(in the second sense). Of course, s-convexity means just convexity when s =
1.

In \cite{DF99}, Dragomir and Fitzpatrick proved a variant of Hadamard's
inequality which holds for $s-$convex functions in the second sense.

\begin{theorem}
Suppose that $f:[0,\infty )\rightarrow \lbrack 0,\infty )$ is an $s-$convex
function in the second sense, where $s\in (0,1)$, and let $a,b\in \lbrack
0,\infty )$, $a<b.$If $f\in L[a,b]$ then the following inequalities hold%
\begin{equation}
2^{s-1}f\left( \frac{a+b}{2}\right) \leq \frac{1}{b-a}\dint%
\limits_{a}^{b}f(x)dx\leq \frac{f(a)+f(b)}{s+1}.  \label{1-3}
\end{equation}
\end{theorem}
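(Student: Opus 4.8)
\noindent The plan is to deduce both halves of (\ref{1-3}) directly from the defining inequality (\ref{1-2}), using the affine parametrization $x=ta+(1-t)b$, $t\in\lbrack 0,1]$, under which
\begin{equation*}
\frac{1}{b-a}\dint\limits_{a}^{b}f(x)dx=\dint\limits_{0}^{1}f\bigl(ta+(1-t)b\bigr)dt;
\end{equation*}
the hypothesis $f\in L[a,b]$ guarantees that this change of variables is legitimate and that all the integrals below are finite.

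For the right-hand inequality I would apply (\ref{1-2}) pointwise in $t$: since $ta+(1-t)b$ is a convex combination of $a$ and $b$, we have $f\bigl(ta+(1-t)b\bigr)\leq t^{s}f(a)+(1-t)^{s}f(b)$ for every $t\in\lbrack 0,1]$. Integrating this over $[0,1]$ and using $\dint\limits_{0}^{1}t^{s}dt=\dint\limits_{0}^{1}(1-t)^{s}dt=\frac{1}{s+1}$ gives at once $\frac{1}{b-a}\dint\limits_{a}^{b}f(x)dx\leq \frac{f(a)+f(b)}{s+1}$.

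For the left-hand inequality the key step is a symmetrization: pair the point $ta+(1-t)b$ with its mirror image $(1-t)a+tb$ about the midpoint, so that their average is exactly $\frac{a+b}{2}$. Applying (\ref{1-2}) with weight $t=\frac{1}{2}$ yields
\begin{equation*}
f\left( \frac{a+b}{2}\right) \leq \left( \frac{1}{2}\right) ^{s}\Bigl[ f\bigl(ta+(1-t)b\bigr)+f\bigl((1-t)a+tb\bigr)\Bigr] .
\end{equation*}
Integrating in $t$ over $[0,1]$ and noting that each of the two terms on the right integrates to $\frac{1}{b-a}\dint\limits_{a}^{b}f(x)dx$, one obtains $f\left( \frac{a+b}{2}\right) \leq 2^{1-s}\cdot \frac{1}{b-a}\dint\limits_{a}^{b}f(x)dx$; multiplying through by $2^{s-1}$ gives the asserted bound.

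The computation is essentially routine — the only things to watch are that $f$ is assumed nonnegative, so no sign reversals occur when integrating the pointwise estimates, and that $f\in L[a,b]$ is precisely what makes $t\mapsto f(ta+(1-t)b)$ integrable. The one genuinely non-obvious ingredient is the reflection trick used for the midpoint inequality; everything else follows by substituting into (\ref{1-2}) and evaluating two elementary integrals.
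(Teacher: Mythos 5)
Your proof is correct, and it is essentially the standard Dragomir--Fitzpatrick argument that the paper itself only cites (as \cite{DF99}) rather than reproving: the right-hand bound by integrating the $s$-convexity inequality along $t\mapsto ta+(1-t)b$, and the left-hand bound by the reflection trick with weight $\frac{1}{2}$ applied to $ta+(1-t)b$ and $(1-t)a+tb$. Only a cosmetic remark: nonnegativity of $f$ is not actually needed to integrate the pointwise inequalities (monotonicity of the integral does that); what matters, as you note, is that $f\in L[a,b]$ makes all the integrals finite and the change of variables valid.
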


Both inequalities hold in the reversed direction if f is $s-$concave. The
constant $k=\frac{1}{s+1}$ is the best possible in the second inequality in (%
\ref{1-3}).

\section{Main results}

In order to prove our main theorems, we need the following Lemma.

\begin{lemma}
\label{1.1}Let $f:I\subseteq \mathbb{R\rightarrow R}$ be a differentiable
mapping on $I^{\circ }$ such that $f^{\prime }\in L[a,b]$, where $a,b\in I$
with $a<b$ and $\theta ,\lambda \in \left[ 0,1\right] $. Then the following
equality holds:%
\begin{eqnarray*}
&&\left( 1-\theta \right) \left( \lambda f(a)+\left( 1-\lambda \right)
f(b)\right) +\theta f(\left( 1-\lambda \right) a+\lambda b)-\frac{1}{b-a}%
\dint\limits_{a}^{b}f(x)dx \\
&=&\left( b-a\right) \left[ -\lambda ^{2}\dint\limits_{0}^{1}\left( t-\theta
\right) f^{\prime }\left( ta+\left( 1-t\right) \left[ \left( 1-\lambda
\right) a+\lambda b\right] \right) dt\right. \\
&&\left. +\left( 1-\lambda \right) ^{2}\dint\limits_{0}^{1}\left( t-\theta
\right) f^{\prime }\left( tb+\left( 1-t\right) \left[ \left( 1-\lambda
\right) a+\lambda b\right] \right) dt\right] .
\end{eqnarray*}
\end{lemma}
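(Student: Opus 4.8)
The plan is to prove the identity by evaluating the two integrals on the right-hand side via integration by parts, and then performing the linear change of variables that turns the resulting integrals over $[0,1]$ into the single integral $\frac{1}{b-a}\int_a^b f(x)\,dx$. First I would set $c=(1-\lambda)a+\lambda b$ for brevity, so that the two integrands are $f'(ta+(1-t)c)$ and $f'(tb+(1-t)c)$. For the first integral, note that $\frac{d}{dt}f(ta+(1-t)c)=(a-c)f'(ta+(1-t)c)$ and $a-c=-\lambda(b-a)$, so $f'(ta+(1-t)c)=\frac{-1}{\lambda(b-a)}\frac{d}{dt}f(ta+(1-t)c)$. Integrating $\int_0^1(t-\theta)f'(ta+(1-t)c)\,dt$ by parts with $u=t-\theta$, $dv=f'(ta+(1-t)c)\,dt$ gives a boundary term $\frac{-1}{\lambda(b-a)}\big[(t-\theta)f(ta+(1-t)c)\big]_0^1$ plus $\frac{1}{\lambda(b-a)}\int_0^1 f(ta+(1-t)c)\,dt$. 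The boundary term evaluates to $\frac{-1}{\lambda(b-a)}\big[(1-\theta)f(a)+\theta f(c)\big]$, using $f(0\cdot a+1\cdot c)=f(c)$ at $t=0$ and $f(a)$ at $t=1$.

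Next I would handle the second integral identically: $\frac{d}{dt}f(tb+(1-t)c)=(b-c)f'(tb+(1-t)c)$ with $b-c=(1-\lambda)(b-a)$, so integration by parts yields boundary term $\frac{1}{(1-\lambda)(b-a)}\big[(1-\theta)f(b)+\theta f(c)\big]$ minus $\frac{1}{(1-\lambda)(b-a)}\int_0^1 f(tb+(1-t)c)\,dt$. Now I multiply the first integral result by $-\lambda^2$ and the second by $(1-\lambda)^2$ and add. The $-\lambda^2$ and $(1-\lambda)^2$ factors cancel cleanly against the $\frac{1}{\lambda(b-a)}$ and $\frac{1}{(1-\lambda)(b-a)}$ denominators, producing (after multiplying through by $(b-a)$, which is the outer factor in the statement): the boundary contributions combine to $(1-\theta)\lambda f(a)+(1-\theta)(1-\lambda)f(b)+\theta(\lambda+(1-\lambda))f(c)=(1-\theta)(\lambda f(a)+(1-\lambda)f(b))+\theta f((1-\lambda)a+\lambda b)$, which is exactly the first line of the claimed left-hand side, while the remaining integral terms assemble into $-\lambda\int_0^1 f(ta+(1-t)c)\,dt-(1-\lambda)\int_0^1 f(tb+(1-t)c)\,dt$.

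It then remains to show that this last expression equals $-\frac{1}{b-a}\int_a^b f(x)\,dx$. In the first integral I substitute $x=ta+(1-t)c$, so $dx=(a-c)\,dt=-\lambda(b-a)\,dt$; as $t$ runs from $0$ to $1$, $x$ runs from $c$ down to $a$, giving $\lambda\int_0^1 f(ta+(1-t)c)\,dt=\frac{1}{b-a}\int_a^c f(x)\,dx$. In the second I substitute $x=tb+(1-t)c$, so $dx=(b-c)\,dt=(1-\lambda)(b-a)\,dt$ and $x$ runs from $c$ to $b$, giving $(1-\lambda)\int_0^1 f(tb+(1-t)c)\,dt=\frac{1}{b-a}\int_c^b f(x)\,dx$. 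Adding, $\frac{1}{b-a}\big(\int_a^c+\int_c^b\big)f(x)\,dx=\frac{1}{b-a}\int_a^b f(x)\,dx$, which completes the identity.

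The computation is entirely routine; the only place demanding care is bookkeeping the signs and the cancellation of the $\lambda^2$, $(1-\lambda)^2$ weights against the Jacobian factors $\lambda(b-a)$, $(1-\lambda)(b-a)$ coming from the chain rule — getting a sign wrong there is the main pitfall. One should also note the degenerate cases $\lambda=0$ or $\lambda=1$, where one of the substitutions is singular; there the corresponding term simply drops out and the identity is checked directly (or by continuity), so it causes no real difficulty.
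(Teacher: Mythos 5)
Your proof is correct and follows essentially the same route as the paper: integration by parts on the two right-hand-side integrals (with the factors $-\lambda^{2}$ and $(1-\lambda)^{2}$ cancelling the Jacobian factors $\lambda(b-a)$ and $(1-\lambda)(b-a)$), followed by the substitutions $x=ta+(1-t)c$ and $x=tb+(1-t)c$ to recombine into $\frac{1}{b-a}\int_{a}^{b}f(x)\,dx$, with the degenerate cases $\lambda\in\{0,1\}$ treated separately exactly as the paper does. No gaps.
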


\begin{proof}
Firstly suppose that $\lambda \in \left( 0,1\right) $ and let $C=\left(
1-\lambda \right) a+\lambda b.$ 
\begin{equation*}
I=-\lambda ^{2}\dint\limits_{0}^{1}\left( t-\theta \right) f^{\prime }\left(
ta+\left( 1-t\right) C\right) dt+\left( 1-\lambda \right)
^{2}\dint\limits_{0}^{1}\left( t-\theta \right) f^{\prime }\left( tb+\left(
1-t\right) C\right) dt
\end{equation*}%
integrating by parts, we get%
\begin{eqnarray*}
I &=&\left. \left( t-\theta \right) \frac{\lambda f\left( ta+C\left(
1-t\right) \right) }{b-a}\right\vert _{0}^{1}-\frac{\lambda }{b-a}%
\dint\limits_{0}^{1}f\left( ta+C\left( 1-t\right) \right) dt \\
&&\left. +\left( t-\theta \right) \frac{\left( 1-\lambda \right) f\left(
tb+C\left( 1-t\right) \right) }{b-a}\right\vert _{0}^{1}-\frac{\left(
1-\lambda \right) }{b-a}\dint\limits_{0}^{1}f\left( tb+C\left( 1-t\right)
\right) dt \\
&=&\lambda \left( 1-\theta \right) \frac{f\left( a\right) }{b-a}+\lambda
\theta \frac{f\left( C\right) }{b-a}+\left( 1-\lambda \right) \left(
1-\theta \right) \frac{f\left( b\right) }{b-a}+\left( 1-\lambda \right)
\theta \frac{f\left( C\right) }{b-a} \\
&&-\frac{1}{b-a}\left[ \lambda \dint\limits_{0}^{1}f\left( ta+C\left(
1-t\right) \right) dt+\left( 1-\lambda \right) \dint\limits_{0}^{1}f\left(
tb+C\left( 1-t\right) \right) dt\right]
\end{eqnarray*}%
Setting $x=ta+C\left( 1-t\right) ,$ $dx=\lambda \left( a-b\right) dt,$and $%
x=tb+C\left( 1-t\right) ,$ $dx=\mu \left( b-a\right) dt$ respectively, we
obtain%
\begin{eqnarray*}
&&\lambda \dint\limits_{0}^{1}f\left( ta+C\left( 1-t\right) \right)
dt+\left( 1-\lambda \right) \dint\limits_{0}^{1}f\left( tb+C\left(
1-t\right) \right) dt \\
&=&\frac{1}{b-a}\left[ \dint\limits_{a}^{C}f\left( x\right)
dx+\dint\limits_{C}^{b}f\left( x\right) dx\right] =\frac{1}{b-a}%
\dint\limits_{a}^{b}f\left( x\right) dx
\end{eqnarray*}%
and so we have%
\begin{equation*}
\left( b-a\right) I=\left( 1-\theta \right) \left( \lambda f(a)+\left(
1-\lambda \right) f(b)\right) +\theta f(C)-\frac{1}{b-a}\dint%
\limits_{a}^{b}f(x)dx
\end{equation*}%
which gives the desired representation (\ref{2-1}).

Secondly suppose that $\lambda \in \left\{ 0,1\right\} .$ The identities%
\begin{equation*}
\left( 1-\theta \right) f(b)+\theta f(a)-\frac{1}{b-a}\dint%
\limits_{a}^{b}f(x)dx=\left( b-a\right) \dint\limits_{0}^{1}\left( t-\theta
\right) f^{\prime }\left( tb+\left( 1-t\right) a\right) dt
\end{equation*}%
and%
\begin{equation*}
\left( 1-\theta \right) f(a)+\theta f(b)-\frac{1}{b-a}\dint%
\limits_{a}^{b}f(x)dx=\left( b-a\right) \dint\limits_{0}^{1}\left( \theta
-t\right) f^{\prime }\left( ta+\left( 1-t\right) b\right) dt.
\end{equation*}%
can be proved by performing an integration by parts in the integrals from
the right side and changing the variable.
\end{proof}

\begin{theorem}
\label{2.1}Let $f:I\subseteq \lbrack 0,\infty )\mathbb{\rightarrow R}$ be a
differentiable mapping on $I^{\circ }$ such that $f^{\prime }\in L[a,b]$,
where $a,b\in I^{\circ }$ with $a<b$ and $\theta ,\lambda \in \left[ 0,1%
\right] $. If $\left\vert f^{\prime }\right\vert ^{q}$ is $s-$convex on $%
[a,b]$, $q\geq 1,$ then the following inequality holds:%
\begin{equation*}
\left\vert \left( 1-\theta \right) \left( \lambda f(a)+\left( 1-\lambda
\right) f(b)\right) +\theta f(\left( 1-\lambda \right) a+\lambda b)-\frac{1}{%
b-a}\dint\limits_{a}^{b}f(x)dx\right\vert
\end{equation*}%
\begin{eqnarray}
&\leq &\left( b-a\right) A_{1}^{1-\frac{1}{q}}(\theta )\left\{ \lambda ^{2}%
\left[ \left\vert f^{\prime }\left( a\right) \right\vert ^{q}A_{2}(\theta
,s)+\left\vert f^{\prime }\left( C\right) \right\vert ^{q}A_{3}(\theta ,s)%
\right] ^{\frac{1}{q}}\right.  \notag \\
&&\left. +\left( 1-\lambda \right) ^{2}\left[ \left\vert f^{\prime }\left(
b\right) \right\vert ^{q}A_{2}(\theta ,s)+\left\vert f^{\prime }\left(
C\right) \right\vert ^{q}A_{3}(\theta ,s)\right] ^{\frac{1}{q}}\right\}
\label{2-1}
\end{eqnarray}%
where 
\begin{eqnarray*}
A_{1}(\theta ) &=&\theta ^{2}-\theta +\frac{1}{2} \\
A_{2}(\theta ,s) &=&\frac{2\theta ^{s+2}}{(s+1)(s+2)}-\frac{\theta }{s+1}+%
\frac{1}{s+2}, \\
A_{3}(\theta ,s) &=&\frac{2\left( 1-\theta \right) ^{s+2}}{(s+1)(s+2)}-\frac{%
1-\theta }{s+1}+\frac{1}{s+2}.
\end{eqnarray*}%
and $C=\left( 1-\lambda \right) a+\lambda b.$
\end{theorem}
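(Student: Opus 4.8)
The plan is to start from the identity in Lemma \ref{1.1}, which already expresses the left-hand side as $(b-a)$ times a combination of two weighted integrals of $f'$. Writing $C=(1-\lambda)a+\lambda b$ and applying the triangle inequality, the quantity to be estimated is bounded by
$$(b-a)\left[\lambda^2\int_0^1|t-\theta|\,\bigl|f'(ta+(1-t)C)\bigr|\,dt+(1-\lambda)^2\int_0^1|t-\theta|\,\bigl|f'(tb+(1-t)C)\bigr|\,dt\right].$$
Note that $ta+(1-t)C$ lies between $a$ and $C$, and $tb+(1-t)C$ between $C$ and $b$, so both arguments stay in $[a,b]$, where $|f'|^q$ is assumed $s$-convex, and $C\in[0,\infty)$, so the hypotheses may legitimately be applied.

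Next, to each of the two integrals I would apply the power mean (Hölder) inequality with exponents $1-\tfrac1q$ and $\tfrac1q$, taking $|t-\theta|\,dt$ as the underlying measure:
$$\int_0^1|t-\theta|\,|f'(\cdot)|\,dt\le\left(\int_0^1|t-\theta|\,dt\right)^{1-\frac1q}\left(\int_0^1|t-\theta|\,|f'(\cdot)|^q\,dt\right)^{\frac1q}.$$
Here $\int_0^1|t-\theta|\,dt$ is evaluated by splitting the range at $t=\theta$, namely $\int_0^\theta(\theta-t)\,dt+\int_\theta^1(t-\theta)\,dt$, which equals $\theta^2-\theta+\tfrac12=A_1(\theta)$.

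Then I would bound the remaining $q$-th power integrals using the $s$-convexity of $|f'|^q$: from $(\ref{1-2})$ one has $|f'(ta+(1-t)C)|^q\le t^s|f'(a)|^q+(1-t)^s|f'(C)|^q$ and similarly $|f'(tb+(1-t)C)|^q\le t^s|f'(b)|^q+(1-t)^s|f'(C)|^q$. Hence each of these integrals is dominated by the corresponding combination of $\int_0^1|t-\theta|t^s\,dt$ and $\int_0^1|t-\theta|(1-t)^s\,dt$. The first, split again at $t=\theta$, gives $\tfrac{2\theta^{s+2}}{(s+1)(s+2)}-\tfrac{\theta}{s+1}+\tfrac1{s+2}=A_2(\theta,s)$; the second, via the substitution $u=1-t$, equals $A_2(1-\theta,s)=A_3(\theta,s)$. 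Substituting these back, factoring $A_1(\theta)^{1-1/q}$ out in front, and collecting the $\lambda^2$ and $(1-\lambda)^2$ terms yields exactly the inequality $(\ref{2-1})$. When $q=1$ the power mean step is vacuous and one applies $s$-convexity directly to the first displayed bound; the same formula results.

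The only genuine work is the careful evaluation of the three elementary integrals $A_1,A_2,A_3$ — in particular handling $|t-\theta|$ by partitioning $[0,1]$ at $t=\theta$ and doing the two polynomial integrations — together with bookkeeping of which of $f'(a),f'(b),f'(C)$ is multiplied by $A_2(\theta,s)$ and which by $A_3(\theta,s)$. I do not expect any conceptual obstacle beyond that.
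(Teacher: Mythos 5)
Your proposal is correct and follows essentially the same route as the paper's proof: it invokes Lemma \ref{1.1}, applies the power mean (weighted H\"{o}lder) inequality with weight $\left\vert t-\theta \right\vert \,dt$, uses the $s$-convexity of $\left\vert f^{\prime }\right\vert ^{q}$ on each subinterval, and evaluates the three elementary integrals giving $A_{1}(\theta )$, $A_{2}(\theta ,s)$ and $A_{3}(\theta ,s)=A_{2}(1-\theta ,s)$. The only difference is cosmetic: you spell out the splitting at $t=\theta $ and the substitution $u=1-t$, which the paper leaves as ``simple computation.''
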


\begin{proof}
Suppose that $q\geq 1$ and $C=\left( 1-\lambda \right) a+\lambda b.$ From
Lemma \ref{1.1} and using the well known power mean inequality, we have%
\begin{eqnarray*}
&&\left\vert \left( 1-\theta \right) \left( \lambda f(a)+\left( 1-\lambda
\right) f(b)\right) +\theta f(C)-\frac{1}{b-a}\dint\limits_{a}^{b}f(x)dx%
\right\vert \leq \left( b-a\right) \\
&&\left[ \lambda ^{2}\dint\limits_{0}^{1}\left\vert t-\theta \right\vert
\left\vert f^{\prime }\left( ta+\left( 1-t\right) C\right) \right\vert
dt+\left( 1-\lambda \right) ^{2}\dint\limits_{0}^{1}\left\vert t-\theta
\right\vert \left\vert f^{\prime }\left( tb+\left( 1-t\right) C\right)
\right\vert dt\right] \\
&\leq &\left( b-a\right) \left\{ \lambda ^{2}\left(
\dint\limits_{0}^{1}\left\vert t-\theta \right\vert dt\right) ^{1-\frac{1}{q}%
}\left( \dint\limits_{0}^{1}\left\vert t-\theta \right\vert \left\vert
f^{\prime }\left( ta+\left( 1-t\right) C\right) \right\vert ^{q}dt\right) ^{%
\frac{1}{q}}\right.
\end{eqnarray*}%
\begin{equation}
\left. +\left( 1-\lambda \right) ^{2}\left( \dint\limits_{0}^{1}\left\vert
t-\theta \right\vert dt\right) ^{1-\frac{1}{q}}\left(
\dint\limits_{0}^{1}\left\vert t-\theta \right\vert \left\vert f^{\prime
}\left( tb+\left( 1-t\right) C\right) \right\vert ^{q}dt\right) ^{\frac{1}{q}%
}\right\} .  \label{2-1a}
\end{equation}

Since $\left\vert f^{\prime }\right\vert ^{q}$ is $s-$convex on $[a,b],$ we
know that for $t\in \left[ 0,1\right] $%
\begin{equation}
\left\vert f^{\prime }\left( ta+C\left( 1-t\right) \right) \right\vert
^{q}\leq t^{s}\left\vert f^{\prime }\left( a\right) \right\vert ^{q}+\left(
1-t\right) ^{s}\left\vert f^{\prime }\left( C\right) \right\vert ^{q}
\label{2-11a}
\end{equation}%
and%
\begin{equation}
\left\vert f^{\prime }\left( tb+C\left( 1-t\right) \right) \right\vert
^{q}\leq t^{s}\left\vert f^{\prime }\left( b\right) \right\vert ^{q}+\left(
1-t\right) ^{s}\left\vert f^{\prime }\left( C\right) \right\vert ^{q}.
\label{2-11b}
\end{equation}%
Hence, by simple computation%
\begin{eqnarray}
&&\dint\limits_{0}^{1}\left\vert t-\theta \right\vert \left\vert f^{\prime
}\left( ta+\left( 1-t\right) C\right) \right\vert ^{q}dt  \notag \\
&\leq &\dint\limits_{0}^{1}\left\vert t-\theta \right\vert \left[
t^{s}\left\vert f^{\prime }\left( a\right) \right\vert ^{q}+\left(
1-t\right) ^{s}\left\vert f^{\prime }\left( C\right) \right\vert ^{q}\right]
dt  \notag \\
&=&\left\vert f^{\prime }\left( a\right) \right\vert
^{q}\dint\limits_{0}^{1}\left\vert t-\theta \right\vert t^{s}dt+\left\vert
f^{\prime }\left( C\right) \right\vert ^{q}\dint\limits_{0}^{1}\left\vert
t-\theta \right\vert \left( 1-t\right) ^{s}dt  \notag \\
&=&\left\vert f^{\prime }\left( a\right) \right\vert ^{q}A_{2}(\theta
,s)+\left\vert f^{\prime }\left( C\right) \right\vert ^{q}A_{3}(\theta ,s)
\label{2-1b}
\end{eqnarray}%
\begin{eqnarray}
&&\dint\limits_{0}^{1}\left\vert t-\theta \right\vert \left\vert f^{\prime
}\left( tb+\left( 1-t\right) C\right) \right\vert ^{q}dt  \notag \\
&\leq &\dint\limits_{0}^{1}\left\vert t-\theta \right\vert \left[
t^{s}\left\vert f^{\prime }\left( b\right) \right\vert ^{q}+\left(
1-t\right) ^{s}\left\vert f^{\prime }\left( C\right) \right\vert ^{q}\right]
dt  \notag \\
&=&\left\vert f^{\prime }\left( b\right) \right\vert
^{q}\dint\limits_{0}^{1}\left\vert t-\theta \right\vert t^{s}dt+\left\vert
f^{\prime }\left( C\right) \right\vert ^{q}\dint\limits_{0}^{1}\left\vert
t-\theta \right\vert \left( 1-t\right) ^{s}dt  \notag \\
&=&\left\vert f^{\prime }\left( b\right) \right\vert ^{q}A_{2}(\theta
,s)+\left\vert f^{\prime }\left( C\right) \right\vert ^{q}A_{3}(\theta ,s)
\label{2-1c}
\end{eqnarray}%
and%
\begin{equation}
\dint\limits_{0}^{1}\left\vert t-\theta \right\vert dt=\theta ^{2}-\theta +%
\frac{1}{2}.  \label{2-1d}
\end{equation}%
Thus, using (\ref{2-1b}),(\ref{2-1c}) and (\ref{2-1d}) in (\ref{2-1a}), we
obtain the inequality (\ref{2-1}). This completes the proof.
\end{proof}

\begin{corollary}
Under the assumptions of Theorem \ref{2.1} with $q=1,$ we have 
\begin{eqnarray*}
&&\left\vert \left( 1-\theta \right) \left( \lambda f(a)+\left( 1-\lambda
\right) f(b)\right) +\theta f(\left( 1-\lambda \right) a+\lambda b)-\frac{1}{%
b-a}\dint\limits_{a}^{b}f(x)dx\right\vert \\
&\leq &\left( b-a\right) \left\{ A_{2}(\theta ,s)\left( \lambda
^{2}\left\vert f^{\prime }\left( a\right) \right\vert +\left( 1-\lambda
\right) ^{2}\left\vert f^{\prime }\left( b\right) \right\vert \right)
+A_{3}(\theta ,s)\left( 2\lambda ^{2}-2\lambda +1\right) \left\vert
f^{\prime }\left( C\right) \right\vert \right\}
\end{eqnarray*}%
where $A_{2}(\theta ,s)$ and $A_{3}(\theta ,s)$ are defined as in Theorem %
\ref{2.1}.
\end{corollary}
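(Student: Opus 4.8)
The plan is simply to specialize Theorem \ref{2.1} to the case $q=1$ and then regroup terms. First I would note that for $q=1$ the exponent $1-\tfrac1q$ equals $0$, so the factor $A_{1}^{1-1/q}(\theta)$ appearing in (\ref{2-1}) becomes $1$; since $A_{1}(\theta)=\theta^{2}-\theta+\tfrac12>0$ for every $\theta\in[0,1]$ there is no ambiguity in this evaluation. Likewise, with $q=1$ each bracket of the form $\bigl[\,|f^{\prime}(\cdot)|^{q}A_{2}(\theta,s)+|f^{\prime}(C)|^{q}A_{3}(\theta,s)\,\bigr]^{1/q}$ collapses to $|f^{\prime}(\cdot)|A_{2}(\theta,s)+|f^{\prime}(C)|A_{3}(\theta,s)$.

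Substituting these simplifications into (\ref{2-1}) gives the bound
\[
(b-a)\Bigl\{\lambda^{2}\bigl[\,|f^{\prime}(a)|A_{2}(\theta,s)+|f^{\prime}(C)|A_{3}(\theta,s)\,\bigr]+(1-\lambda)^{2}\bigl[\,|f^{\prime}(b)|A_{2}(\theta,s)+|f^{\prime}(C)|A_{3}(\theta,s)\,\bigr]\Bigr\}.
\]
Next I would collect the coefficient of $A_{2}(\theta,s)$, which is $\lambda^{2}|f^{\prime}(a)|+(1-\lambda)^{2}|f^{\prime}(b)|$, and the coefficient of $A_{3}(\theta,s)$, which is $\bigl(\lambda^{2}+(1-\lambda)^{2}\bigr)|f^{\prime}(C)|$. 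Expanding $(1-\lambda)^{2}=1-2\lambda+\lambda^{2}$ yields $\lambda^{2}+(1-\lambda)^{2}=2\lambda^{2}-2\lambda+1$, and this produces exactly the claimed inequality.

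There is essentially no genuine obstacle here: the power-mean inequality invoked in the proof of Theorem \ref{2.1} introduces no loss when $q=1$, so the specialization is lossless and the remaining work is a one-line algebraic regrouping. If one preferred a self-contained argument, the same estimate follows directly from Lemma \ref{1.1}: bound $|f^{\prime}(ta+(1-t)C)|\le t^{s}|f^{\prime}(a)|+(1-t)^{s}|f^{\prime}(C)|$ and $|f^{\prime}(tb+(1-t)C)|\le t^{s}|f^{\prime}(b)|+(1-t)^{s}|f^{\prime}(C)|$ via $s$-convexity of $|f^{\prime}|$, multiply by $|t-\theta|$, and integrate, using $\int_{0}^{1}|t-\theta|t^{s}\,dt=A_{2}(\theta,s)$ and $\int_{0}^{1}|t-\theta|(1-t)^{s}\,dt=A_{3}(\theta,s)$ as already computed in the proof of Theorem \ref{2.1}; but quoting Theorem \ref{2.1} is the shortest route.
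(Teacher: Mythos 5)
Your proposal is correct and is exactly the intended argument: the paper states this corollary without a separate proof precisely because it is the $q=1$ specialization of Theorem \ref{2.1}, where $A_{1}^{1-1/q}(\theta)=1$ and the brackets collapse, followed by the regrouping $\lambda^{2}+(1-\lambda)^{2}=2\lambda^{2}-2\lambda+1$. Nothing further is needed.
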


\begin{corollary}
Under the assumptions of Theorem \ref{2.1} with $s=1$ we have%
\begin{equation*}
\left\vert \left( 1-\theta \right) \left( \lambda f(a)+\left( 1-\lambda
\right) f(b)\right) +\theta f(C)-\frac{1}{b-a}\dint\limits_{a}^{b}f(x)dx%
\right\vert
\end{equation*}%
\begin{eqnarray*}
&\leq &\left( b-a\right) A_{1}^{1-\frac{1}{q}}(\theta )\left\{ \lambda ^{2}%
\left[ \left\vert f^{\prime }\left( a\right) \right\vert ^{q}A_{2}(\theta
,1)+\left\vert f^{\prime }\left( C\right) \right\vert ^{q}A_{3}(\theta ,1)%
\right] ^{\frac{1}{q}}\right. \\
&&\left. +\left( 1-\lambda \right) ^{2}\left[ \left\vert f^{\prime }\left(
b\right) \right\vert ^{q}A_{2}(\theta ,1)+\left\vert f^{\prime }\left(
C\right) \right\vert ^{q}A_{3}(\theta ,1)\right] ^{\frac{1}{q}}\right\}
\end{eqnarray*}%
where 
\begin{eqnarray*}
A_{1}(\theta ) &=&\theta ^{2}-\theta +\frac{1}{2} \\
A_{2}(\theta ,1) &=&\frac{1+\theta ^{3}}{3}-\frac{\theta }{2}, \\
A_{3}(\theta ,1) &=&\frac{1+\left( 1-\theta \right) ^{3}}{3}-\frac{1-\theta 
}{2},
\end{eqnarray*}%
and $C=\left( 1-\lambda \right) a+\lambda b.$
\end{corollary}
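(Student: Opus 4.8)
The plan is simply to specialize Theorem~\ref{2.1} to the case $s=1$. First I would recall that $s$-convexity in the second sense coincides with ordinary convexity when $s=1$; hence the hypothesis that $\left\vert f^{\prime }\right\vert ^{q}$ is $s$-convex on $[a,b]$ becomes the statement that $\left\vert f^{\prime }\right\vert ^{q}$ is convex on $[a,b]$, while all the other hypotheses of Theorem~\ref{2.1} (namely $f$ differentiable on $I^{\circ }$ with $f^{\prime }\in L[a,b]$, $a<b$, $q\geq 1$, and $\theta ,\lambda \in \left[ 0,1\right] $) are retained verbatim. Consequently inequality~(\ref{2-1}) holds with $s=1$, and it only remains to evaluate the constants $A_{1}(\theta )$, $A_{2}(\theta ,1)$ and $A_{3}(\theta ,1)$ in closed form.

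The quantity $A_{1}(\theta )=\theta ^{2}-\theta +\frac{1}{2}$ does not depend on $s$, so it is unchanged. For the other two, I would substitute $s=1$ into the definitions given in Theorem~\ref{2.1}:
\begin{equation*}
A_{2}(\theta ,1)=\frac{2\theta ^{3}}{2\cdot 3}-\frac{\theta }{2}+\frac{1}{3}=\frac{1+\theta ^{3}}{3}-\frac{\theta }{2},
\end{equation*}
\begin{equation*}
A_{3}(\theta ,1)=\frac{2\left( 1-\theta \right) ^{3}}{2\cdot 3}-\frac{1-\theta }{2}+\frac{1}{3}=\frac{1+\left( 1-\theta \right) ^{3}}{3}-\frac{1-\theta }{2}.
\end{equation*}
Inserting these three expressions into (\ref{2-1}) reproduces exactly the asserted inequality, which completes the argument.

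There is essentially no obstacle here: the corollary is a direct specialization of Theorem~\ref{2.1}, and the only computation is the elementary arithmetic simplification of $A_{2}$ and $A_{3}$ at $s=1$. The one point worth checking is that $s=1$ is genuinely admissible in the framework of Theorem~\ref{2.1}; it is, since the theorem imposes no upper constraint forcing $s<1$ and only asks for $s$-convexity of $\left\vert f^{\prime }\right\vert ^{q}$, a condition which remains meaningful (and coincides with convexity) at the endpoint $s=1$.
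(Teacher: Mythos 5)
Your proposal is correct and follows exactly the route the paper intends: the corollary is just Theorem~\ref{2.1} evaluated at $s=1$ (where $s$-convexity reduces to convexity), and your substitutions $A_{2}(\theta ,1)=\frac{1+\theta ^{3}}{3}-\frac{\theta }{2}$ and $A_{3}(\theta ,1)=\frac{1+\left( 1-\theta \right) ^{3}}{3}-\frac{1-\theta }{2}$, with $A_{1}(\theta )$ unchanged, are the same elementary computation the paper relies on. Nothing further is needed.
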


\begin{corollary}
Under the assumptions of Theorem \ref{2.1} with $\theta =1,$ then we have
following generalized midpoint type inequality%
\begin{eqnarray*}
&&\left\vert f(\left( 1-\lambda \right) a+\lambda b)-\frac{1}{b-a}%
\dint\limits_{a}^{b}f(x)dx\right\vert \\
&\leq &\frac{b-a}{2}\left( \frac{2}{(s+1)(s+2)}\right) ^{\frac{1}{q}}\left\{
\lambda ^{2}\left( \left\vert f^{\prime }(a)\right\vert ^{q}+\left(
s+1\right) \left\vert f^{\prime }(C)\right\vert ^{q}\right) ^{\frac{1}{q}%
}\right. \\
&&\left. +\left( 1-\lambda \right) ^{2}\left( \left\vert f^{\prime
}(b)\right\vert ^{q}+\left( s+1\right) \left\vert f^{\prime }(C)\right\vert
^{q}\right) ^{\frac{1}{q}}\right\} ,
\end{eqnarray*}%
where $C=\left( 1-\lambda \right) a+\lambda b.$
\end{corollary}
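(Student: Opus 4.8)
The plan is to obtain this corollary as an immediate specialization of Theorem \ref{2.1}: one simply sets $\theta = 1$ in inequality (\ref{2-1}) and simplifies the three constants $A_1,A_2,A_3$. First I would evaluate $A_1(1) = 1^2 - 1 + \frac12 = \frac12$, so that $A_1^{1-1/q}(1) = (1/2)^{1-1/q} = \frac12\cdot 2^{1/q}$. Next, $A_2(1,s) = \frac{2}{(s+1)(s+2)} - \frac{1}{s+1} + \frac{1}{s+2}$; placing everything over the common denominator $(s+1)(s+2)$, the numerator collapses to $2 - (s+2) + (s+1) = 1$, hence $A_2(1,s) = \frac{1}{(s+1)(s+2)}$. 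Likewise, since the $(1-\theta)$-terms vanish at $\theta = 1$, one gets $A_3(1,s) = \frac{1}{s+2}$.

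Then I would substitute these values into (\ref{2-1}). In each bracket of the form $\big[\,|f^{\prime}(a)|^q A_2(1,s) + |f^{\prime}(C)|^q A_3(1,s)\,\big]^{1/q}$ I would factor out $A_2(1,s) = \frac{1}{(s+1)(s+2)}$, using $\frac{A_3(1,s)}{A_2(1,s)} = s+1$, to rewrite it as $\big(\frac{1}{(s+1)(s+2)}\big)^{1/q}\big(|f^{\prime}(a)|^q + (s+1)|f^{\prime}(C)|^q\big)^{1/q}$, and similarly for the $b$-term. Collecting the scalar factors outside the braces, $(b-a)\cdot\frac12\, 2^{1/q}\cdot\big(\frac{1}{(s+1)(s+2)}\big)^{1/q} = \frac{b-a}{2}\big(\frac{2}{(s+1)(s+2)}\big)^{1/q}$, which is exactly the constant appearing in the statement; the bracketed expressions then match verbatim.

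There is essentially no obstacle here: the argument is a routine algebraic simplification of the constants furnished by Theorem \ref{2.1}. The only point requiring a moment of care is the bookkeeping of the exponents $1-\frac1q$ versus $\frac1q$ when pulling $A_2(1,s)$ through the $q$-th root and merging it with $A_1^{1-1/q}(1)$; one should verify that the powers of $2$ and of $(s+1)(s+2)$ combine correctly, which they do.
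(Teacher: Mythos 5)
Your computation is correct and is exactly the specialization the paper intends (the corollary is stated without proof as the case $\theta=1$ of Theorem \ref{2.1}): indeed $A_{1}(1)=\tfrac{1}{2}$, $A_{2}(1,s)=\tfrac{1}{(s+1)(s+2)}$, $A_{3}(1,s)=\tfrac{1}{s+2}$, and factoring $A_{2}(1,s)^{1/q}$ out of each bracket and merging it with $A_{1}^{1-1/q}(1)$ yields precisely the constant $\tfrac{b-a}{2}\left(\tfrac{2}{(s+1)(s+2)}\right)^{1/q}$. Nothing further is needed.
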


\begin{corollary}
Under the assumptions of Theorem \ref{2.1} with $\theta =1,$ if $\left\vert
f^{\prime }(x)\right\vert \leq M,$ $x\in \left[ a,b\right] ,$ then we have
the following Ostrowski type inequality%
\begin{equation}
\left\vert f(x)-\frac{1}{b-a}\dint\limits_{a}^{b}f(u)du\right\vert \leq
M\left( \frac{2}{s+1}\right) ^{\frac{1}{q}}\left[ \frac{\left( x-a\right)
^{2}+\left( b-x\right) ^{2}}{2\left( b-a\right) }\right]  \label{2-1e}
\end{equation}%
for each $x\in \left[ a,b\right] .$
\end{corollary}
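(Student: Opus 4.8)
The plan is to obtain this Ostrowski type inequality as a direct specialization of Theorem~\ref{2.1} (equivalently, of the preceding generalized midpoint corollary) by setting $\theta =1$ and then inserting the uniform bound $\left\vert f^{\prime }\right\vert \le M$. First I would evaluate the auxiliary constants at $\theta =1$: a one–line computation gives
$A_{1}(1)=\frac{1}{2}$, $A_{2}(1,s)=\frac{2}{(s+1)(s+2)}-\frac{1}{s+1}+\frac{1}{s+2}=\frac{1}{(s+1)(s+2)}$, and $A_{3}(1,s)=\frac{1}{s+2}$. Substituting these into (\ref{2-1}) and pulling the common factor out of each bracket (using $\frac{1}{s+2}=(s+1)\cdot\frac{1}{(s+1)(s+2)}$ together with $(\frac12)^{1-1/q}\cdot(\frac{1}{(s+1)(s+2)})^{1/q}=\frac12(\frac{2}{(s+1)(s+2)})^{1/q}$) reproduces exactly the generalized midpoint estimate of the previous corollary, with $C=(1-\lambda)a+\lambda b$.

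Next I would use that $\left\vert f^{\prime }(x)\right\vert \le M$ on $[a,b]$ and that $C=(1-\lambda)a+\lambda b\in [a,b]$, so $\left\vert f^{\prime }(a)\right\vert ,\left\vert f^{\prime }(b)\right\vert ,\left\vert f^{\prime }(C)\right\vert \le M$. Hence each bracket obeys the elementary estimate
$\left( \left\vert f^{\prime }(a)\right\vert ^{q}+(s+1)\left\vert f^{\prime }(C)\right\vert ^{q}\right) ^{1/q}\le \left( M^{q}+(s+1)M^{q}\right) ^{1/q}=M(s+2)^{1/q}$,
and likewise with $b$ in place of $a$. Merging constants via $\left( \frac{2}{(s+1)(s+2)}\right) ^{1/q}(s+2)^{1/q}=\left( \frac{2}{s+1}\right) ^{1/q}$, the right-hand side collapses to $\frac{b-a}{2}\,M\left( \frac{2}{s+1}\right) ^{1/q}\left( \lambda ^{2}+(1-\lambda )^{2}\right)$.

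Finally, for a given $x\in [a,b]$ I would choose $\lambda =\frac{x-a}{b-a}$, so that $1-\lambda =\frac{b-x}{b-a}$ and $C=(1-\lambda )a+\lambda b=x$; then $\lambda ^{2}+(1-\lambda )^{2}=\frac{(x-a)^{2}+(b-x)^{2}}{(b-a)^{2}}$, and substituting gives precisely (\ref{2-1e}). I expect no genuine obstacle: the argument is a pure specialization followed by the trivial bound $u^{q}+(s+1)v^{q}\le (s+2)M^{q}$ for $u,v\le M$, so the only thing requiring care is the bookkeeping of the exponents $1-\frac{1}{q}$ and $\frac{1}{q}$ when the constant factors $A_{1}(1)$, $A_{2}(1,s)$, $A_{3}(1,s)$ are combined.
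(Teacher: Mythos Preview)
Your proposal is correct and follows essentially the same route as the paper: specialize Theorem~\ref{2.1} at $\theta=1$, parametrize $x$ via $\lambda=\frac{x-a}{b-a}$ so that $C=x$, and insert the bound $\left\vert f^{\prime}\right\vert\le M$. The paper's own proof is a two-line version of exactly this argument (it simply cites (\ref{2-1}) after writing $\lambda_x=\frac{x-a}{b-a}$), while you have additionally spelled out the evaluation of $A_1(1)$, $A_2(1,s)$, $A_3(1,s)$ and the constant bookkeeping, all of which is correct.
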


\begin{proof}
For each $x\in \left[ a,b\right] $, there exist $\lambda _{x}\in \left[ 0,1%
\right] $ such that $x=\left( 1-\lambda _{x}\right) a+\lambda _{x}b.$ Hence
we have $\lambda _{x}=\frac{x-a}{b-a}$ and $1-\lambda _{x}=\frac{b-x}{b-a}.$
Therefore for each $x\in \left[ a,b\right] ,$ from the inequality (\ref{2-1}%
) we obtain the inequality (\ref{2-1e}).
\end{proof}

\begin{remark}
We note that the inequality (\ref{2-1e}) is the same of the inequality in 
\cite[Theorem 4]{ADDC10}.
\end{remark}

\begin{corollary}
Under the assumptions of Theorem \ref{2.1} with $\theta =0,$ then we have
following generalized trapezoid type inequality%
\begin{eqnarray*}
&&\left\vert \lambda f(a)+\left( 1-\lambda \right) f(b)-\frac{1}{b-a}%
\dint\limits_{a}^{b}f(x)dx\right\vert \\
&\leq &\frac{b-a}{2}\left( \frac{2}{(s+1)(s+2)}\right) ^{\frac{1}{q}}\left\{
\lambda ^{2}\left( \left( s+1\right) \left\vert f^{\prime }(a)\right\vert
^{q}+\left\vert f^{\prime }(C)\right\vert ^{q}\right) ^{\frac{1}{q}}\right.
\\
&&\left. +\left( 1-\lambda \right) ^{2}\left( \left( s+1\right) \left\vert
f^{\prime }(b)\right\vert ^{q}+\left\vert f^{\prime }(C)\right\vert
^{q}\right) ^{\frac{1}{q}}\right\} ,
\end{eqnarray*}%
where $C=\left( 1-\lambda \right) a+\lambda b.$
\end{corollary}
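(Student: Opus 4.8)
The plan is to specialize Theorem~\ref{2.1} to the case $\theta =0$ and then simplify the resulting constants; no new idea is needed, since the real content has already been supplied by Lemma~\ref{1.1} and Theorem~\ref{2.1}. First I would evaluate the three auxiliary quantities of Theorem~\ref{2.1} at $\theta =0$. Directly from their definitions one gets $A_{1}(0)=\frac{1}{2}$, $A_{2}(0,s)=\frac{1}{s+2}$, and $A_{3}(0,s)=\frac{2}{(s+1)(s+2)}-\frac{1}{s+1}+\frac{1}{s+2}$, which, after putting everything over the common denominator $(s+1)(s+2)$, collapses to $A_{3}(0,s)=\frac{1}{(s+1)(s+2)}$. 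In particular $A_{2}(0,s)/A_{3}(0,s)=s+1$.

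Next I would substitute these values into inequality~(\ref{2-1}). Inside each bracket of the form $\left[ \left\vert f^{\prime }(\cdot )\right\vert ^{q}A_{2}(0,s)+\left\vert f^{\prime }(C)\right\vert ^{q}A_{3}(0,s)\right] ^{1/q}$ I would factor out $A_{3}(0,s)=\frac{1}{(s+1)(s+2)}$, turning it into $\left( \frac{1}{(s+1)(s+2)}\right) ^{1/q}\left( (s+1)\left\vert f^{\prime }(\cdot )\right\vert ^{q}+\left\vert f^{\prime }(C)\right\vert ^{q}\right) ^{1/q}$, where $\cdot $ is $a$ for the first bracket and $b$ for the second.

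Finally I would combine the leftover scalar factors. Since $A_{1}^{1-1/q}(0)=\left( \frac{1}{2}\right) ^{1-1/q}$, we have $\left( b-a\right) A_{1}^{1-1/q}(0)\left( \frac{1}{(s+1)(s+2)}\right) ^{1/q}=\frac{b-a}{2}\cdot 2^{1/q}\left( \frac{1}{(s+1)(s+2)}\right) ^{1/q}=\frac{b-a}{2}\left( \frac{2}{(s+1)(s+2)}\right) ^{1/q}$, using the identity $\left( \frac{1}{2}\right) ^{1-1/q}=\frac{1}{2}\cdot 2^{1/q}$. Keeping the factors $\lambda ^{2}$ and $\left( 1-\lambda \right) ^{2}$ in front of the respective brackets then yields exactly the asserted generalized trapezoid inequality. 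This is a pure substitution-and-bookkeeping exercise; the only two spots requiring a moment's care are the algebraic simplification of $A_{3}(0,s)$ and the recombination of the powers of $\frac{1}{2}$ and $2^{1/q}$, so I do not anticipate any genuine obstacle.
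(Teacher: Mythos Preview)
Your proposal is correct and is exactly the intended derivation: the paper states this corollary without proof, as a direct specialization of Theorem~\ref{2.1} at $\theta=0$, and your computation of $A_{1}(0)$, $A_{2}(0,s)$, $A_{3}(0,s)$ together with the factorization and recombination of the scalar constants is precisely the bookkeeping that justifies it.
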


\begin{corollary}
Under the assumptions of Theorem \ref{2.1} with $\lambda =\frac{1}{2}$ and $%
\theta =\frac{2}{3},$ then we have the following Simpson type inequality%
\begin{eqnarray*}
&&\left\vert \frac{1}{6}\left[ f(a)+4f\left( \frac{a+b}{2}\right) +f(b)%
\right] -\frac{1}{b-a}\dint\limits_{a}^{b}f(x)dx\right\vert \\
&\leq &\frac{b-a}{4}\left( \frac{5}{18}\right) ^{1-\frac{1}{q}}\left\{
\left( A_{2}(\frac{2}{3},s)\left\vert f^{\prime }(a)\right\vert ^{q}+A_{3}(%
\frac{2}{3},s)\left\vert f^{\prime }(\frac{a+b}{2})\right\vert ^{q}\right) ^{%
\frac{1}{q}}\right. \\
&&\left. +\left( A_{2}(\frac{2}{3},s)\left\vert f^{\prime }(b)\right\vert
^{q}+A_{3}(\frac{2}{3},s)\left\vert f^{\prime }(\frac{a+b}{2})\right\vert
^{q}\right) ^{\frac{1}{q}}\right\} ,
\end{eqnarray*}%
where 
\begin{eqnarray*}
A_{2}(\frac{2}{3},s) &=&\frac{2^{s+3}+3^{s+1}\left( s-1\right) }{%
3^{s+2}(s+1)(s+2)}, \\
A_{3}(\frac{2}{3},s) &=&\frac{2+3^{s+1}\left( 2s+1\right) }{3^{s+2}(s+1)(s+2)%
}.
\end{eqnarray*}
\end{corollary}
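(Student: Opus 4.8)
The plan is to apply Theorem \ref{2.1} with the particular choices $\lambda =\frac{1}{2}$ and $\theta =\frac{2}{3}$ and then simplify the resulting constants. First I would observe that with $\lambda =\frac{1}{2}$ one has $C=\left( 1-\lambda \right) a+\lambda b=\frac{a+b}{2}$ and $\lambda ^{2}=\left( 1-\lambda \right) ^{2}=\frac{1}{4}$, while the expression inside the absolute value on the left-hand side of (\ref{2-1}) becomes
\[
\left( 1-\tfrac{2}{3}\right) \left( \tfrac{1}{2}f(a)+\tfrac{1}{2}f(b)\right) +\tfrac{2}{3}f\!\left( \tfrac{a+b}{2}\right) =\tfrac{1}{6}\left[ f(a)+4f\!\left( \tfrac{a+b}{2}\right) +f(b)\right] ,
\]
so that the quantity to be estimated is exactly the Simpson functional appearing in the statement.

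Next I would evaluate $A_{1}\!\left( \frac{2}{3}\right) =\left( \frac{2}{3}\right) ^{2}-\frac{2}{3}+\frac{1}{2}=\frac{5}{18}$, which together with the factor $\lambda ^{2}=\left( 1-\lambda \right) ^{2}=\frac{1}{4}$ (pulled out of both bracketed terms) produces the prefactor $\frac{b-a}{4}\left( \frac{5}{18}\right) ^{1-\frac{1}{q}}$. It then remains to substitute $\theta =\frac{2}{3}$ into the formulas for $A_{2}(\theta ,s)$ and $A_{3}(\theta ,s)$ supplied by Theorem \ref{2.1}. Using $\theta ^{s+2}=2^{s+2}/3^{s+2}$ and $(1-\theta )^{s+2}=1/3^{s+2}$ and placing all terms over the common denominator $3^{s+2}(s+1)(s+2)$, the numerators collapse, after combining the contributions proportional to $3^{s+1}$, to $2^{s+3}+3^{s+1}(s-1)$ and $2+3^{s+1}(2s+1)$ respectively, which are precisely the claimed closed forms for $A_{2}(\frac{2}{3},s)$ and $A_{3}(\frac{2}{3},s)$.

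Finally, inserting $C=\frac{a+b}{2}$, the value $A_{1}(\frac{2}{3})=\frac{5}{18}$, and these simplified $A_{2},A_{3}$ into (\ref{2-1}) and factoring $\frac{1}{4}$ out of the braces gives the asserted inequality. There is no serious obstacle here; the statement is a pure specialization of Theorem \ref{2.1}, and the only nontrivial work is the bookkeeping in reducing $A_{2}(\frac{2}{3},s)$ and $A_{3}(\frac{2}{3},s)$ to the tidy forms displayed, where one should be careful with the algebra of the $3^{s+1}$-terms (the identities $-2(s+2)+3(s+1)=s-1$ and $-(s+2)+3(s+1)=2s+1$ are what make the final expressions compact).
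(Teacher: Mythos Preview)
Your proposal is correct and is exactly the approach of the paper: the corollary is stated there without a separate proof, as a direct specialization of Theorem \ref{2.1} with $\lambda=\tfrac12$ and $\theta=\tfrac23$. Your computations of $A_1(\tfrac23)=\tfrac{5}{18}$ and of the closed forms for $A_2(\tfrac23,s)$ and $A_3(\tfrac23,s)$ (via the identities $-2(s+2)+3(s+1)=s-1$ and $-(s+2)+3(s+1)=2s+1$) are accurate and supply the bookkeeping the paper omits.
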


\begin{corollary}
Under the assumptions of Theorem \ref{2.1} with $\lambda =\frac{1}{2}$ and $%
\theta =1,$ then we have following midpoint type inequality%
\begin{eqnarray*}
&&\left\vert f\left( \frac{a+b}{2}\right) -\frac{1}{b-a}\dint%
\limits_{a}^{b}f(x)dx\right\vert \leq \frac{b-a}{8}\left( \frac{2}{(s+1)(s+2)%
}\right) ^{\frac{1}{q}} \\
&&\times \left\{ \left( \left\vert f^{\prime }(a)\right\vert ^{q}+\left(
s+1\right) \left\vert f^{\prime }(\frac{a+b}{2})\right\vert ^{q}\right) ^{%
\frac{1}{q}}\right. \\
&&\left. +\left( \left\vert f^{\prime }(b)\right\vert ^{q}+\left( s+1\right)
\left\vert f^{\prime }(\frac{a+b}{2})\right\vert ^{q}\right) ^{\frac{1}{q}%
}\right\} .
\end{eqnarray*}
\end{corollary}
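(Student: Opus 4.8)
The plan is to obtain this midpoint-type estimate directly from Theorem~\ref{2.1} by specializing the free parameters to $\theta = 1$ and $\lambda = \tfrac12$; equivalently, one may start from the generalized midpoint type inequality already recorded (the corollary with $\theta = 1$) and simply set $\lambda = \tfrac12$. Thus essentially all that is needed is to evaluate the auxiliary constants $A_1, A_2, A_3$ at $\theta = 1$ and then tidy up the numerical prefactor.

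First I would substitute $\theta = 1$ in the definitions of $A_1, A_2, A_3$. One gets $A_1(1) = 1 - 1 + \tfrac12 = \tfrac12$, hence $A_1^{1-1/q}(1) = (\tfrac12)^{1-1/q} = 2^{1/q}/2$. For $A_2$, bringing $\frac{2}{(s+1)(s+2)} - \frac{1}{s+1} + \frac{1}{s+2}$ over the common denominator $(s+1)(s+2)$ collapses the numerator to $2 - (s+2) + (s+1) = 1$, so $A_2(1,s) = \frac{1}{(s+1)(s+2)}$; and since $(1-\theta)^{s+2}$ and $(1-\theta)$ both vanish at $\theta = 1$, we have $A_3(1,s) = \frac{1}{s+2} = \frac{s+1}{(s+1)(s+2)}$. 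Consequently, writing $C = (1-\lambda)a + \lambda b$, each of the two bracketed expressions in (\ref{2-1}) factors cleanly:
\[
\left\vert f^{\prime}(a)\right\vert^{q}A_2(1,s) + \left\vert f^{\prime}(C)\right\vert^{q}A_3(1,s) = \frac{1}{(s+1)(s+2)}\left( \left\vert f^{\prime}(a)\right\vert^{q} + (s+1)\left\vert f^{\prime}(C)\right\vert^{q}\right),
\]
and likewise with $b$ in place of $a$.

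Next I would set $\lambda = \tfrac12$, so that $\lambda^{2} = (1-\lambda)^{2} = \tfrac14$ and $C = \tfrac{a+b}{2}$, and plug everything into (\ref{2-1}). The left-hand side of (\ref{2-1}) reduces to $\left\vert f\left( \tfrac{a+b}{2}\right) - \tfrac{1}{b-a}\int_a^b f(x)\,dx\right\vert$, since $1-\theta = 0$ kills the $\lambda f(a) + (1-\lambda)f(b)$ term, while the prefactor multiplying each $q$th root becomes
\[
(b-a)\cdot \frac{2^{1/q}}{2}\cdot \frac14 \cdot \left( \frac{1}{(s+1)(s+2)}\right)^{1/q} = \frac{b-a}{8}\left( \frac{2}{(s+1)(s+2)}\right)^{1/q},
\]
the factor $2^{1/q}$ being absorbed into the $1/q$th power. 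Collecting the two resulting $q$th-root terms yields exactly the asserted inequality.

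Since every step is a substitution into an inequality that is already proved, there is no genuine obstacle here; the only point requiring a little care is the constant bookkeeping --- confirming that $(\tfrac12)^{1-1/q}$ combined with $\left( \tfrac{1}{(s+1)(s+2)}\right)^{1/q}$ recombines to $\left( \tfrac{2}{(s+1)(s+2)}\right)^{1/q}$, and that the factor $\tfrac14$ coming from $\lambda^{2}$ turns the $b-a$ scale into $\tfrac{b-a}{8}$.
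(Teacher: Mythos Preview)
Your proposal is correct and follows exactly the approach implicit in the paper: the corollary is stated without proof and is obtained by direct substitution of $\theta=1$, $\lambda=\tfrac12$ into Theorem~\ref{2.1} (or equivalently, $\lambda=\tfrac12$ into the $\theta=1$ corollary). Your evaluations of $A_1(1)$, $A_2(1,s)$, $A_3(1,s)$ and the subsequent constant bookkeeping are all accurate.
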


\begin{corollary}
Under the assumptions of Theorem \ref{2.1} with $\lambda =\frac{1}{2}$ , and 
$\theta =0,$ then we get the following trapezoid type inequality%
\begin{eqnarray*}
&&\left\vert \frac{f\left( a\right) +f\left( b\right) }{2}-\frac{1}{b-a}%
\dint\limits_{a}^{b}f(x)dx\right\vert \leq \frac{b-a}{8}\left( \frac{2}{%
(s+1)(s+2)}\right) ^{\frac{1}{q}} \\
&&\times \left\{ \left( \left( s+1\right) \left\vert f^{\prime
}(a)\right\vert ^{q}+\left\vert f^{\prime }(\frac{a+b}{2})\right\vert
^{q}\right) ^{\frac{1}{q}}\right. \\
&&\left. +\left( \left( s+1\right) \left\vert f^{\prime }(b)\right\vert
^{q}+\left\vert f^{\prime }(\frac{a+b}{2})\right\vert ^{q}\right) ^{\frac{1}{%
q}}\right\} .
\end{eqnarray*}
\end{corollary}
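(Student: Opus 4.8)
The plan is to obtain this corollary as the special case $\lambda=\tfrac12$, $\theta=0$ of Theorem~\ref{2.1} — or, even more cheaply, as the case $\lambda=\tfrac12$ of the generalized trapezoid corollary ($\theta=0$) already established above. Either way the proof is a one-step substitution into (\ref{2-1}), so the whole task is simply to verify that the constants collapse to the stated form.

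First I would record the simplifications forced by the two parameter choices. Setting $\lambda=\tfrac12$ gives $C=(1-\lambda)a+\lambda b=\tfrac{a+b}{2}$ and $\lambda^2=(1-\lambda)^2=\tfrac14$, while, since $\theta=0$, the left-hand expression of (\ref{2-1}) collapses because $(1-\theta)\bigl(\lambda f(a)+(1-\lambda)f(b)\bigr)+\theta f(C)=\tfrac{f(a)+f(b)}{2}$; thus the left-hand side of the inequality becomes $\bigl|\tfrac{f(a)+f(b)}{2}-\tfrac1{b-a}\int_a^b f(x)\,dx\bigr|$, as required. Then I would evaluate the three auxiliary constants at $\theta=0$: at once $A_1(0)=\tfrac12$ and $A_2(0,s)=\tfrac1{s+2}$, and putting the three terms of $A_3(0,s)$ over the common denominator $(s+1)(s+2)$ and using $2-(s+2)+(s+1)=1$ gives $A_3(0,s)=\tfrac1{(s+1)(s+2)}$.

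Next I would plug these values into (\ref{2-1}) and pull the factor $\bigl(\tfrac1{(s+1)(s+2)}\bigr)^{1/q}$ out of each bracket, using $|f'(a)|^qA_2(0,s)+|f'(C)|^qA_3(0,s)=\tfrac1{(s+1)(s+2)}\bigl((s+1)|f'(a)|^q+|f'(C)|^q\bigr)$ and the analogous identity with $b$ in place of $a$. The rest is bookkeeping with powers of $2$ in the prefactor: $(b-a)\,A_1(0)^{1-1/q}\cdot\tfrac14\cdot\bigl(\tfrac1{(s+1)(s+2)}\bigr)^{1/q}=(b-a)\,2^{-(1-1/q)}\,2^{-2}\,\bigl(\tfrac1{(s+1)(s+2)}\bigr)^{1/q}=\tfrac{b-a}{8}\bigl(\tfrac2{(s+1)(s+2)}\bigr)^{1/q}$, which is exactly the constant claimed, and the two bracketed terms match those in the statement.

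I do not expect any genuine obstacle; the one spot where a slip is easy is the exponent arithmetic — note $A_1(0)^{1-1/q}=2^{-1+1/q}$, not $2^{-1-1/q}$, so that the $\tfrac14$ and the $2^{1/q}$ combine into $\tfrac18\cdot 2^{1/q}$. As a consistency check I would verify the $q=1$ instance: the bound becomes $\tfrac{b-a}{4(s+1)(s+2)}\bigl((s+1)(|f'(a)|+|f'(b)|)+2|f'(\tfrac{a+b}{2})|\bigr)$, which agrees with the $q=1$ corollary evaluated at $\lambda=\tfrac12$, $\theta=0$.
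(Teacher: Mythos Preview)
Your proposal is correct and follows exactly the approach implicit in the paper: the corollary is stated without proof as a direct specialization of Theorem~\ref{2.1}, and your substitution $\lambda=\tfrac12$, $\theta=0$ together with the evaluations $A_1(0)=\tfrac12$, $A_2(0,s)=\tfrac{1}{s+2}$, $A_3(0,s)=\tfrac{1}{(s+1)(s+2)}$ and the exponent bookkeeping $(b-a)(1/2)^{1-1/q}\cdot\tfrac14\cdot\bigl(\tfrac{1}{(s+1)(s+2)}\bigr)^{1/q}=\tfrac{b-a}{8}\bigl(\tfrac{2}{(s+1)(s+2)}\bigr)^{1/q}$ are all correct.
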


Using Lemma \ref{1.1} we shall give another result for $s-$convex functions
as follows.

\begin{theorem}
\label{2.2}Let $f:I\subseteq \lbrack 0,\infty )\mathbb{\rightarrow R}$ be a
differentiable mapping on $I^{\circ }$ such that $f^{\prime }\in L[a,b]$,
where $a,b\in I^{\circ }$ with $a<b$ and $\theta ,\lambda \in \left[ 0,1%
\right] $. If $\left\vert f^{\prime }\right\vert ^{q}$ is $s-$convex on $%
[a,b]$, $q>1,$ then the following inequality holds:%
\begin{equation*}
\left\vert \left( 1-\theta \right) \left( \lambda f(a)+\left( 1-\lambda
\right) f(b)\right) +\theta f(\left( 1-\lambda \right) a+\lambda b)-\frac{1}{%
b-a}\dint\limits_{a}^{b}f(x)dx\right\vert
\end{equation*}%
\begin{eqnarray}
&\leq &\left( b-a\right) \left( \frac{\theta ^{p+1}+\left( 1-\theta \right)
^{p+1}}{p+1}\right) ^{\frac{1}{p}}  \notag \\
&&\times \left[ \lambda ^{2}\left( \frac{\left\vert f^{\prime }\left(
a\right) \right\vert ^{q}+\left\vert f^{\prime }\left( C\right) \right\vert
^{q}}{s+1}\right) ^{\frac{1}{q}}+\left( 1-\lambda \right) ^{2}\left( \frac{%
\left\vert f^{\prime }\left( b\right) \right\vert ^{q}+\left\vert f^{\prime
}\left( C\right) \right\vert ^{q}}{s+1}\right) ^{\frac{1}{q}}\right] .
\label{2-2}
\end{eqnarray}%
where $C=\left( 1-\lambda \right) a+\lambda b$ and $\frac{1}{p}+\frac{1}{q}%
=1.$
\end{theorem}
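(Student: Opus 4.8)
The plan is to follow the proof of Theorem~\ref{2.1} almost verbatim, replacing the power-mean inequality by H\"older's inequality. Put $C=\left(1-\lambda\right)a+\lambda b$ as before. Starting from the identity in Lemma~\ref{1.1} and moving the absolute value inside the two integrals, I would then split each integrand $\left\vert t-\theta\right\vert\left\vert f^{\prime}\left(\cdot\right)\right\vert$ by H\"older's inequality with conjugate exponents $p$ and $q$ (the hypothesis $q>1$ is what makes $p$ finite here, in contrast to Theorem~\ref{2.1}). This gives
\begin{equation*}
\dint\limits_{0}^{1}\left\vert t-\theta\right\vert\left\vert f^{\prime}\left(ta+\left(1-t\right)C\right)\right\vert dt\leq\left(\dint\limits_{0}^{1}\left\vert t-\theta\right\vert^{p}dt\right)^{\frac{1}{p}}\left(\dint\limits_{0}^{1}\left\vert f^{\prime}\left(ta+\left(1-t\right)C\right)\right\vert^{q}dt\right)^{\frac{1}{q}},
\end{equation*}
and the analogous estimate with $a$ replaced by $b$.

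Two elementary computations then finish the argument. First, splitting the integral at $t=\theta$,
\begin{equation*}
\dint\limits_{0}^{1}\left\vert t-\theta\right\vert^{p}dt=\dint\limits_{0}^{\theta}\left(\theta-t\right)^{p}dt+\dint\limits_{\theta}^{1}\left(t-\theta\right)^{p}dt=\frac{\theta^{p+1}+\left(1-\theta\right)^{p+1}}{p+1},
\end{equation*}
which produces the factor in front of (\ref{2-2}). Second, applying the $s$-convexity estimates (\ref{2-11a}) and (\ref{2-11b}) and integrating, together with $\dint_{0}^{1}t^{s}dt=\dint_{0}^{1}\left(1-t\right)^{s}dt=\frac{1}{s+1}$, one obtains
\begin{equation*}
\dint\limits_{0}^{1}\left\vert f^{\prime}\left(ta+\left(1-t\right)C\right)\right\vert^{q}dt\leq\frac{\left\vert f^{\prime}\left(a\right)\right\vert^{q}+\left\vert f^{\prime}\left(C\right)\right\vert^{q}}{s+1}
\end{equation*}
and similarly $\dint_{0}^{1}\left\vert f^{\prime}\left(tb+\left(1-t\right)C\right)\right\vert^{q}dt\leq\frac{\left\vert f^{\prime}\left(b\right)\right\vert^{q}+\left\vert f^{\prime}\left(C\right)\right\vert^{q}}{s+1}$. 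Substituting these three bounds into the estimate coming from Lemma~\ref{1.1} yields (\ref{2-2}).

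There is no genuine obstacle here: the whole proof is routine once Lemma~\ref{1.1} is available, and the only place deserving a word of care is the degenerate case $\lambda\in\left\{0,1\right\}$, where $C$ equals $a$ or $b$. In that case, exactly as in the proof of Lemma~\ref{1.1}, one of the two bracketed terms vanishes and the remaining one reduces to a single H\"older estimate, so (\ref{2-2}) continues to hold; thus the inequality is valid for all $\lambda\in\left[0,1\right]$.
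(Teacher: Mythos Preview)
Your proof is correct and follows essentially the same route as the paper's own proof: start from Lemma~\ref{1.1}, apply H\"older's inequality with exponents $p,q$, compute $\int_{0}^{1}|t-\theta|^{p}\,dt=\frac{\theta^{p+1}+(1-\theta)^{p+1}}{p+1}$, and bound $\int_{0}^{1}|f'(\cdot)|^{q}\,dt$ via the $s$-convexity inequalities (\ref{2-11a})--(\ref{2-11b}). Your extra comment on the degenerate case $\lambda\in\{0,1\}$ is a nice touch that the paper does not make explicit.
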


\begin{proof}
Suppose that $C=\left( 1-\lambda \right) a+\lambda b.$ From Lemma \ref{1.1}
and by H\"{o}lder's integral inequality, we have%
\begin{eqnarray*}
&&\left\vert \left( 1-\theta \right) \left( \lambda f(a)+\left( 1-\lambda
\right) f(b)\right) +\theta f(C)-\frac{1}{b-a}\dint\limits_{a}^{b}f(x)dx%
\right\vert \leq \left( b-a\right) \\
&&\left[ \lambda ^{2}\dint\limits_{0}^{1}\left\vert t-\theta \right\vert
\left\vert f^{\prime }\left( ta+\left( 1-t\right) C\right) \right\vert
dt+\left( 1-\lambda \right) ^{2}\dint\limits_{0}^{1}\left\vert t-\theta
\right\vert \left\vert f^{\prime }\left( tb+\left( 1-t\right) C\right)
\right\vert dt\right] \\
&\leq &\left( b-a\right) \left\{ \lambda ^{2}\left(
\dint\limits_{0}^{1}\left\vert t-\theta \right\vert ^{p}dt\right) ^{\frac{1}{%
p}}\left( \dint\limits_{0}^{1}\left\vert f^{\prime }\left( ta+\left(
1-t\right) C\right) \right\vert ^{q}dt\right) ^{\frac{1}{q}}\right.
\end{eqnarray*}%
\begin{equation}
\left. +\left( 1-\lambda \right) ^{2}\left( \dint\limits_{0}^{1}\left\vert
t-\theta \right\vert ^{p}dt\right) ^{\frac{1}{p}}\left(
\dint\limits_{0}^{1}\left\vert f^{\prime }\left( tb+\left( 1-t\right)
C\right) \right\vert ^{q}dt\right) ^{\frac{1}{q}}\right\} .  \label{2-2a}
\end{equation}%
Since $\left\vert f^{\prime }\right\vert ^{q}$ is $s-$convex on $[a,b],$ the
inequalities (\ref{2-11a}) and (\ref{2-11b}) holds. Hence, by simple
computation%
\begin{eqnarray}
\dint\limits_{0}^{1}\left\vert f^{\prime }\left( ta+\left( 1-t\right)
C\right) \right\vert ^{q}dt &\leq &\dint\limits_{0}^{1}t^{s}\left\vert
f^{\prime }\left( a\right) \right\vert ^{q}+\left( 1-t\right) ^{s}\left\vert
f^{\prime }\left( C\right) \right\vert ^{q}  \notag \\
&=&\frac{\left\vert f^{\prime }\left( a\right) \right\vert ^{q}+\left\vert
f^{\prime }\left( C\right) \right\vert ^{q}}{s+1}  \label{2-2b}
\end{eqnarray}%
\begin{eqnarray}
\dint\limits_{0}^{1}\left\vert f^{\prime }\left( tb+\left( 1-t\right)
C\right) \right\vert ^{q}dt &\leq &\dint\limits_{0}^{1}t^{s}\left\vert
f^{\prime }\left( b\right) \right\vert ^{q}+\left( 1-t\right) ^{s}\left\vert
f^{\prime }\left( C\right) \right\vert ^{q}  \notag \\
&=&\frac{\left\vert f^{\prime }\left( b\right) \right\vert ^{q}+\left\vert
f^{\prime }\left( C\right) \right\vert ^{q}}{s+1}  \label{2-2c}
\end{eqnarray}%
and%
\begin{equation}
\dint\limits_{0}^{1}\left\vert t-\theta \right\vert ^{p}dt=\frac{\theta
^{p+1}+\left( 1-\theta \right) ^{p+1}}{p+1}  \label{2-2d}
\end{equation}%
thus, using (\ref{2-2b})-(\ref{2-2d}) in (\ref{2-2a}), we obtain the
inequality (\ref{2-2}). This completes the proof.
\end{proof}

\begin{corollary}
Under the assumptions of Theorem \ref{2.2} with $s=1$, we have%
\begin{equation*}
\left\vert \left( 1-\theta \right) \left( \lambda f(a)+\left( 1-\lambda
\right) f(b)\right) +\theta f(\left( 1-\lambda \right) a+\lambda b)-\frac{1}{%
b-a}\dint\limits_{a}^{b}f(x)dx\right\vert
\end{equation*}%
\begin{eqnarray*}
&\leq &\left( b-a\right) \left( \frac{\theta ^{p+1}+\left( 1-\theta \right)
^{p+1}}{p+1}\right) ^{\frac{1}{p}}\left( \frac{1}{2}\right) ^{\frac{1}{q}} \\
&&\times \left[ \lambda ^{2}\left( \left\vert f^{\prime }\left( a\right)
\right\vert ^{q}+\left\vert f^{\prime }\left( C\right) \right\vert
^{q}\right) ^{\frac{1}{q}}+\left( 1-\lambda \right) ^{2}\left( \left\vert
f^{\prime }\left( b\right) \right\vert ^{q}+\left\vert f^{\prime }\left(
C\right) \right\vert ^{q}\right) ^{\frac{1}{q}}\right] ,
\end{eqnarray*}%
where $C=\left( 1-\lambda \right) a+\lambda b$ and $\frac{1}{p}+\frac{1}{q}%
=1.$
\end{corollary}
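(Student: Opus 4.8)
The plan is to derive this corollary as an immediate specialization of Theorem~\ref{2.2} to the case $s=1$. First I would note that, by the definition of $s$-convexity recalled above (which for $s=1$ coincides with ordinary convexity), the hypothesis that $\lvert f^{\prime}\rvert^{q}$ is $s$-convex on $[a,b]$ with $s=1$ is precisely the hypothesis that $\lvert f^{\prime}\rvert^{q}$ is convex on $[a,b]$; all the remaining hypotheses of Theorem~\ref{2.2}---that $f$ is differentiable on $I^{\circ}$ with $f^{\prime}\in L[a,b]$, that $\theta,\lambda\in[0,1]$, that $q>1$, and that $\tfrac1p+\tfrac1q=1$---carry over unchanged. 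Hence Theorem~\ref{2.2} applies, and inequality (\ref{2-2}) holds with $s=1$.

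Next I would substitute $s=1$ into the right-hand side of (\ref{2-2}). The parameter $s$ enters that right-hand side only through the two factors $\frac{1}{s+1}$ inside the $q$-th root expressions; it appears neither in $(b-a)$ nor in the constant $\left(\frac{\theta^{p+1}+(1-\theta)^{p+1}}{p+1}\right)^{1/p}$, since the latter arose from $\int_{0}^{1}\lvert t-\theta\rvert^{p}\,dt$ in the proof of Theorem~\ref{2.2} and is independent of $s$. With $s=1$ we have $\frac{1}{s+1}=\frac12$, so the two bracketed terms become $\left(\frac{\lvert f^{\prime}(a)\rvert^{q}+\lvert f^{\prime}(C)\rvert^{q}}{2}\right)^{1/q}$ and $\left(\frac{\lvert f^{\prime}(b)\rvert^{q}+\lvert f^{\prime}(C)\rvert^{q}}{2}\right)^{1/q}$. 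Pulling the constant $\left(\frac12\right)^{1/q}$ out of both roots and collecting it in front of the bracket gives exactly the stated inequality, with leading constant $(b-a)\left(\frac{\theta^{p+1}+(1-\theta)^{p+1}}{p+1}\right)^{1/p}\left(\frac12\right)^{1/q}$.

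There is no genuine obstacle here: the corollary is a pure substitution, and the only point requiring care is the bookkeeping of which quantities depend on $s$. If one preferred a self-contained argument, one could instead re-run the proof of Theorem~\ref{2.2} from Lemma~\ref{1.1}, replacing the $s$-convexity bounds (\ref{2-11a})--(\ref{2-11b}) by the convexity estimates $\lvert f^{\prime}(ta+(1-t)C)\rvert^{q}\le t\lvert f^{\prime}(a)\rvert^{q}+(1-t)\lvert f^{\prime}(C)\rvert^{q}$ and the analogous one with $b$, and then using $\int_{0}^{1}\bigl(t\lvert f^{\prime}(a)\rvert^{q}+(1-t)\lvert f^{\prime}(C)\rvert^{q}\bigr)\,dt=\frac{\lvert f^{\prime}(a)\rvert^{q}+\lvert f^{\prime}(C)\rvert^{q}}{2}$ together with $\int_{0}^{1}\lvert t-\theta\rvert^{p}\,dt=\frac{\theta^{p+1}+(1-\theta)^{p+1}}{p+1}$ and H\"older's inequality; but invoking Theorem~\ref{2.2} directly is shorter and cleaner.
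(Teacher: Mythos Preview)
Your proposal is correct and matches the paper's approach: the corollary is stated without a separate proof in the paper, being an immediate specialization of Theorem~\ref{2.2} obtained by substituting $s=1$ so that $\frac{1}{s+1}=\frac{1}{2}$ and factoring $\left(\frac{1}{2}\right)^{1/q}$ out of both bracketed terms.
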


\begin{corollary}
Under the assumptions of Theorem \ref{2.2} with $\theta =1,$ then we have
the following generalized midpoint type inequality%
\begin{equation*}
\left\vert f(\left( 1-\lambda \right) a+\lambda b)-\frac{1}{b-a}%
\dint\limits_{a}^{b}f(x)dx\right\vert \leq \left( b-a\right) \left( \frac{1}{%
p+1}\right) ^{\frac{1}{p}}
\end{equation*}%
\begin{equation*}
\times \left[ \lambda ^{2}\left( \frac{\left\vert f^{\prime }\left( a\right)
\right\vert ^{q}+\left\vert f^{\prime }\left( C\right) \right\vert ^{q}}{s+1}%
\right) ^{\frac{1}{q}}+\left( 1-\lambda \right) ^{2}\left( \frac{\left\vert
f^{\prime }\left( b\right) \right\vert ^{q}+\left\vert f^{\prime }\left(
C\right) \right\vert ^{q}}{s+1}\right) ^{\frac{1}{q}}\right] ,
\end{equation*}%
where $C=\left( 1-\lambda \right) a+\lambda b$ and $\frac{1}{p}+\frac{1}{q}%
=1.$
\end{corollary}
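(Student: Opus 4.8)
The plan is to obtain this statement as a direct specialization of Theorem \ref{2.2}, taking the parameter choice $\theta = 1$ (with $\lambda \in [0,1]$ and $q>1$ arbitrary, $\frac1p+\frac1q=1$). All hypotheses of Theorem \ref{2.2} are retained verbatim, so inequality (\ref{2-2}) is available; I only need to simplify both sides of (\ref{2-2}) under the substitution $\theta = 1$.

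First I would simplify the left-hand side of (\ref{2-2}). With $\theta = 1$ we have $1-\theta = 0$, so the term $(1-\theta)\big(\lambda f(a)+(1-\lambda)f(b)\big)$ vanishes and the bracketed expression collapses to $\theta f\big((1-\lambda)a+\lambda b\big) = f\big((1-\lambda)a+\lambda b\big)$. Hence the left-hand side becomes exactly
\[
\left\vert f\big((1-\lambda)a+\lambda b\big)-\frac{1}{b-a}\dint\limits_{a}^{b}f(x)\,dx\right\vert,
\]
which is the quantity appearing in the corollary.

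Next I would simplify the constant factor on the right-hand side. The coefficient $\Big(\tfrac{\theta^{p+1}+(1-\theta)^{p+1}}{p+1}\Big)^{1/p}$ evaluates at $\theta=1$ to $\Big(\tfrac{1^{p+1}+0^{p+1}}{p+1}\Big)^{1/p} = \Big(\tfrac{1}{p+1}\Big)^{1/p}$. The remaining factor
\[
\lambda^{2}\left(\frac{\vert f'(a)\vert^{q}+\vert f'(C)\vert^{q}}{s+1}\right)^{1/q}+(1-\lambda)^{2}\left(\frac{\vert f'(b)\vert^{q}+\vert f'(C)\vert^{q}}{s+1}\right)^{1/q},
\]
with $C=(1-\lambda)a+\lambda b$, is independent of $\theta$ and so carries over unchanged. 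Combining these two simplifications with (\ref{2-2}) yields precisely the asserted generalized midpoint type inequality, which completes the proof.

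\textbf{Expected main obstacle.} There is essentially no obstacle: the statement is a pure specialization of Theorem \ref{2.2}, and the only points requiring care are the bookkeeping observations that $(1-\theta)$ annihilates the endpoint contributions when $\theta=1$ and that $0^{p+1}=0$ since $p+1>1$. No new estimate, convexity input, or integration is needed beyond what Theorem \ref{2.2} already supplies.
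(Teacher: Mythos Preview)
Your proposal is correct and matches the paper's approach exactly: the corollary is stated without proof because it is an immediate specialization of Theorem \ref{2.2} at $\theta=1$, and your simplifications of the left-hand side and of the coefficient $\big(\frac{\theta^{p+1}+(1-\theta)^{p+1}}{p+1}\big)^{1/p}$ are precisely what is needed.
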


\begin{corollary}
Under the assumptions of Theorem \ref{2.2} with $\theta =0,$ then we have
the following generalized trapezoid type inequality%
\begin{equation*}
\left\vert \lambda f(a)+\left( 1-\lambda \right) f(b)-\frac{1}{b-a}%
\dint\limits_{a}^{b}f(x)dx\right\vert \leq \left( b-a\right) \left( \frac{1}{%
p+1}\right) ^{\frac{1}{p}}
\end{equation*}%
\begin{equation*}
\times \left[ \lambda ^{2}\left( \frac{\left\vert f^{\prime }\left( a\right)
\right\vert ^{q}+\left\vert f^{\prime }\left( C\right) \right\vert ^{q}}{s+1}%
\right) ^{\frac{1}{q}}+\left( 1-\lambda \right) ^{2}\left( \frac{\left\vert
f^{\prime }\left( b\right) \right\vert ^{q}+\left\vert f^{\prime }\left(
C\right) \right\vert ^{q}}{s+1}\right) ^{\frac{1}{q}}\right] ,
\end{equation*}%
where $C=\left( 1-\lambda \right) a+\lambda b$ and $\frac{1}{p}+\frac{1}{q}%
=1.$
\end{corollary}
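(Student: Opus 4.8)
The plan is to obtain this corollary as an immediate specialization of Theorem~\ref{2.2}, by setting $\theta = 0$ in the inequality (\ref{2-2}). All of the hypotheses of Theorem~\ref{2.2} --- differentiability of $f$ on $I^{\circ }$, $f^{\prime }\in L[a,b]$, $s$-convexity of $\left\vert f^{\prime }\right\vert ^{q}$ with $q>1$, and $\lambda \in [0,1]$ --- are carried over verbatim, and $\theta =0$ evidently lies in $[0,1]$, so the conclusion of Theorem~\ref{2.2} is available to us without any further work.

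First I would evaluate the left-hand side of (\ref{2-2}) at $\theta =0$: the expression $\left( 1-\theta \right) \left( \lambda f(a)+\left( 1-\lambda \right) f(b)\right) +\theta f(C)$ collapses to $\lambda f(a)+\left( 1-\lambda \right) f(b)$, so that the quantity inside the absolute value becomes precisely $\lambda f(a)+\left( 1-\lambda \right) f(b)-\frac{1}{b-a}\int_{a}^{b}f(x)\,dx$, which is the generalized trapezoid functional appearing in the statement. Next I would evaluate the constant factor $\left( \frac{\theta ^{p+1}+\left( 1-\theta \right) ^{p+1}}{p+1}\right) ^{\frac{1}{p}}$ at $\theta =0$; since $0^{p+1}=0$ and $1^{p+1}=1$, this reduces to $\left( \frac{1}{p+1}\right) ^{\frac{1}{p}}$. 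The bracketed sum $\lambda ^{2}\left( \frac{\left\vert f^{\prime }(a)\right\vert ^{q}+\left\vert f^{\prime }(C)\right\vert ^{q}}{s+1}\right) ^{\frac{1}{q}}+\left( 1-\lambda \right) ^{2}\left( \frac{\left\vert f^{\prime }(b)\right\vert ^{q}+\left\vert f^{\prime }(C)\right\vert ^{q}}{s+1}\right) ^{\frac{1}{q}}$ contains no occurrence of $\theta $ and is therefore unchanged. Combining these three observations with (\ref{2-2}) produces exactly the claimed inequality.

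There is essentially no obstacle here: the corollary is a pure substitution into an already-proven theorem, and the only items needing a moment's bookkeeping are that the relations $\frac{1}{p}+\frac{1}{q}=1$ and $C=\left( 1-\lambda \right) a+\lambda b$ are inherited directly from Theorem~\ref{2.2}. If one wishes to add a sanity check, one can note that setting $s=1$ in the resulting bound recovers the classical (convex) generalized trapezoid estimate, and that taking $\lambda =\tfrac12$ recovers the symmetric trapezoid case, consistent with the corollaries stated earlier; but neither check is needed for the proof, which is a one-line specialization.
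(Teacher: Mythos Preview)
Your proposal is correct and matches the paper's approach: the corollary is stated in the paper without proof, as an immediate specialization of Theorem~\ref{2.2} obtained by substituting $\theta=0$ into (\ref{2-2}). Your verification of how each piece of the inequality simplifies under this substitution is exactly the intended one-line argument.
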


\begin{corollary}
Under the assumptions of Theorem \ref{2.2} with $\theta =1,$ if $\left\vert
f^{\prime }(x)\right\vert \leq M,$ $x\in \left[ a,b\right] ,$ then we have
the following Ostrowski type inequality%
\begin{equation}
\left\vert f(x)-\frac{1}{b-a}\dint\limits_{a}^{b}f(u)du\right\vert \leq 
\frac{M}{\left( p+1\right) ^{\frac{1}{p}}}\left( \frac{2}{s+1}\right) ^{%
\frac{1}{q}}\left[ \frac{\left( x-a\right) ^{2}+\left( b-x\right) ^{2}}{b-a}%
\right]  \label{2-2e}
\end{equation}%
for each $x\in \left[ a,b\right] .$
\end{corollary}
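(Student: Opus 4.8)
The plan is to specialize Theorem~\ref{2.2} in exactly the same way the earlier Ostrowski-type corollary specialized Theorem~\ref{2.1}: set $\theta=1$, choose $\lambda$ so that the point $C$ coincides with the prescribed $x$, and then use the uniform bound $|f'|\le M$ to collapse the two bracketed terms.

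First I would record what Theorem~\ref{2.2} gives when $\theta=1$. Since $\theta^{p+1}+(1-\theta)^{p+1}=1$ in that case, the factor $\left(\frac{\theta^{p+1}+(1-\theta)^{p+1}}{p+1}\right)^{1/p}$ becomes $\left(\frac{1}{p+1}\right)^{1/p}$, so (this is the generalized midpoint corollary already stated) for every $\lambda\in[0,1]$,
\begin{equation*}
\left\vert f(C)-\frac{1}{b-a}\dint\limits_{a}^{b}f(u)du\right\vert \leq \left( b-a\right)\left(\frac{1}{p+1}\right)^{\frac1p}\left[\lambda^{2}\left(\frac{\left\vert f^{\prime}(a)\right\vert^{q}+\left\vert f^{\prime}(C)\right\vert^{q}}{s+1}\right)^{\frac1q}+\left(1-\lambda\right)^{2}\left(\frac{\left\vert f^{\prime}(b)\right\vert^{q}+\left\vert f^{\prime}(C)\right\vert^{q}}{s+1}\right)^{\frac1q}\right],
\end{equation*}
with $C=(1-\lambda)a+\lambda b$.

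Next, for a fixed $x\in[a,b]$ I would pick $\lambda=\lambda_{x}=\frac{x-a}{b-a}$, so that $1-\lambda_{x}=\frac{b-x}{b-a}$ and $C=(1-\lambda_{x})a+\lambda_{x}b=x$; this choice is legitimate including the endpoints $x=a,b$ since Lemma~\ref{1.1} and hence Theorem~\ref{2.2} allow $\lambda\in\{0,1\}$. Using $|f'(a)|,|f'(b)|,|f'(x)|\le M$ we get $\left(\frac{|f'(a)|^{q}+|f'(x)|^{q}}{s+1}\right)^{1/q}\le M\left(\frac{2}{s+1}\right)^{1/q}$ and likewise for the $b$-term, so both brackets are bounded by $M\left(\frac{2}{s+1}\right)^{1/q}$. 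Factoring this out leaves $\lambda_{x}^{2}+(1-\lambda_{x})^{2}=\frac{(x-a)^{2}+(b-x)^{2}}{(b-a)^{2}}$, and multiplying by the outer $(b-a)$ yields precisely
\begin{equation*}
\left\vert f(x)-\frac{1}{b-a}\dint\limits_{a}^{b}f(u)du\right\vert \leq \frac{M}{\left(p+1\right)^{\frac1p}}\left(\frac{2}{s+1}\right)^{\frac1q}\left[\frac{\left(x-a\right)^{2}+\left(b-x\right)^{2}}{b-a}\right].
\end{equation*}

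There is no real obstacle here: the estimate is a direct substitution into an already-proved inequality, and the only point worth a remark is that the endpoint values $\lambda_{x}\in\{0,1\}$ are covered by the $\lambda\in\{0,1\}$ case of Lemma~\ref{1.1}, so the formula is valid for \emph{every} $x\in[a,b]$ rather than just the interior.
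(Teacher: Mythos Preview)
Your proof is correct and follows exactly the same approach as the paper: set $\theta=1$, write $x=(1-\lambda_x)a+\lambda_x b$ with $\lambda_x=\frac{x-a}{b-a}$, and insert the bound $|f'|\le M$ into inequality~(\ref{2-2}). You have merely spelled out the intermediate simplifications that the paper leaves implicit.
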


\begin{proof}
For each $x\in \left[ a,b\right] $, there exist $\lambda _{x}\in \left[ 0,1%
\right] $ such that $x=\left( 1-\lambda _{x}\right) a+\lambda _{x}b.$ Hence
we have $\lambda _{x}=\frac{x-a}{b-a}$ and $1-\lambda _{x}=\frac{b-x}{b-a}.$
Therefore for each $x\in \left[ a,b\right] ,$ from the inequality (\ref{2-2}%
) we obtain the inequality (\ref{2-2e}).
\end{proof}

\begin{remark}
We note that the inequality (\ref{2-2e}) is the same of the inequality in 
\cite[Theorem 3]{ADDC10}.
\end{remark}

\begin{corollary}
Under the assumptions of Theorem \ref{2.2} with $\lambda =\frac{1}{2}$ and $%
\theta =\frac{2}{3}$, then we have the following Simpson type inequality 
\begin{equation*}
\left\vert \frac{1}{6}\left[ f(a)+4f\left( \frac{a+b}{2}\right) +f(b)\right]
-\frac{1}{b-a}\dint\limits_{a}^{b}f(x)dx\right\vert
\end{equation*}%
\begin{equation*}
\leq \frac{b-a}{12}\left( \frac{1+2^{p+1}}{3\left( p+1\right) }\right) ^{%
\frac{1}{p}}\left\{ \left( \frac{\left\vert f^{\prime }\left( \frac{a+b}{2}%
\right) \right\vert ^{q}+\left\vert f^{\prime }\left( a\right) \right\vert
^{q}}{s+1}\right) ^{\frac{1}{q}}+\left( \frac{\left\vert f^{\prime }\left( 
\frac{a+b}{2}\right) \right\vert ^{q}+\left\vert f^{\prime }\left( b\right)
\right\vert ^{q}}{s+1}\right) ^{\frac{1}{q}}\right\} ,
\end{equation*}%
which is the same of the inequality in \cite[Theorem 8]{SSO10}.
\end{corollary}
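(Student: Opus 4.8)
The plan is to obtain this Simpson-type estimate by directly specializing inequality~(\ref{2-2}) of Theorem~\ref{2.2} to the parameters $\lambda=\frac{1}{2}$ and $\theta=\frac{2}{3}$; no new analytic input is needed, only careful bookkeeping of the resulting constants.

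First I would substitute $\lambda=\frac{1}{2}$, so that $1-\lambda=\frac{1}{2}$, hence $\lambda^{2}=(1-\lambda)^{2}=\frac{1}{4}$ and the point $C=(1-\lambda)a+\lambda b$ becomes $\frac{a+b}{2}$. With $\theta=\frac{2}{3}$ one has $1-\theta=\frac{1}{3}$, and the quantity inside the absolute value on the left-hand side of~(\ref{2-2}) collapses to
\[
(1-\theta)\bigl(\lambda f(a)+(1-\lambda)f(b)\bigr)+\theta f(C)=\frac{1}{6}\bigl(f(a)+f(b)\bigr)+\frac{2}{3}f\!\left(\frac{a+b}{2}\right)=\frac{1}{6}\left[f(a)+4f\!\left(\frac{a+b}{2}\right)+f(b)\right],
\]
which is exactly the Simpson combination appearing in the statement.

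Next I would simplify the constant $\left(\frac{\theta^{p+1}+(1-\theta)^{p+1}}{p+1}\right)^{1/p}$. With $\theta=\frac{2}{3}$ it equals $\left(\frac{2^{p+1}+1}{3^{p+1}(p+1)}\right)^{1/p}$, and writing $3^{p+1}=3\cdot 3^{p}$ allows me to extract the factor $(3^{p})^{1/p}=3$, leaving $\frac{1}{3}\left(\frac{1+2^{p+1}}{3(p+1)}\right)^{1/p}$. Multiplying this $\frac{1}{3}$ against the coefficients $\lambda^{2}=(1-\lambda)^{2}=\frac{1}{4}$ standing in front of the two $L^{q}$-type brackets in~(\ref{2-2}) produces the clean overall constant $\frac{b-a}{12}\left(\frac{1+2^{p+1}}{3(p+1)}\right)^{1/p}$, while the brackets themselves become $\left(\frac{|f'(a)|^{q}+|f'(C)|^{q}}{s+1}\right)^{1/q}+\left(\frac{|f'(b)|^{q}+|f'(C)|^{q}}{s+1}\right)^{1/q}$ with $C=\frac{a+b}{2}$. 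Assembling these pieces yields precisely the asserted inequality, which moreover coincides with \cite[Theorem 8]{SSO10}.

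The argument is essentially routine substitution; the only place that deserves attention — the ``main obstacle'', mild as it is — is the handling of the $1/p$-th power of the constant, where one must split $3^{p+1}$ correctly as $3\cdot 3^{p}$ so that exactly one factor $\frac{1}{3}$ is pulled out and then merged with the $\frac{1}{4}+\frac{1}{4}$ coming from $\lambda^{2}$ and $(1-\lambda)^{2}$ to give the coefficient $\frac{1}{12}$. Beyond this, everything follows by plugging the chosen values $\lambda=\frac{1}{2}$, $\theta=\frac{2}{3}$ into Theorem~\ref{2.2}.
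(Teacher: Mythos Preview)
Your proposal is correct and matches the paper's own approach: the corollary is stated without a separate proof, as it follows immediately by plugging $\lambda=\tfrac{1}{2}$ and $\theta=\tfrac{2}{3}$ into Theorem~\ref{2.2}, and your bookkeeping of the constants (in particular the extraction of the factor $\tfrac{1}{3}$ from $\bigl(3^{p+1}\bigr)^{-1/p}$ and its combination with $\lambda^{2}=(1-\lambda)^{2}=\tfrac{1}{4}$ to yield $\tfrac{1}{12}$) is accurate.
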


\begin{corollary}
Under the assumptions of Theorem \ref{2.2} with $\lambda =\frac{1}{2}$ and $%
\theta =1,$ then we have the following midpoint type inequality%
\begin{eqnarray}
&&\left\vert f\left( \frac{a+b}{2}\right) -\frac{1}{b-a}\dint%
\limits_{a}^{b}f(x)dx\right\vert \leq \frac{b-a}{4}\left( \frac{1}{p+1}%
\right) ^{\frac{1}{p}}  \label{2-3} \\
&&\times \left\{ \left( \frac{\left\vert f^{\prime }\left( \frac{a+b}{2}%
\right) \right\vert ^{q}+\left\vert f^{\prime }\left( a\right) \right\vert
^{q}}{s+1}\right) ^{\frac{1}{q}}+\left( \frac{\left\vert f^{\prime }\left( 
\frac{a+b}{2}\right) \right\vert ^{q}+\left\vert f^{\prime }\left( b\right)
\right\vert ^{q}}{s+1}\right) ^{\frac{1}{q}}\right\} .  \notag
\end{eqnarray}
\end{corollary}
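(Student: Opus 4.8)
The plan is to obtain this inequality as a direct specialization of Theorem \ref{2.2}, since the hypotheses here are exactly those of Theorem \ref{2.2} together with the particular choices $\lambda =\frac{1}{2}$ and $\theta =1$. So the entire argument is a substitution into inequality (\ref{2-2}), followed by simplification of each of the three factors appearing on its right-hand side.

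First I would substitute $\lambda =\frac{1}{2}$, which gives $C=\left( 1-\frac{1}{2}\right) a+\frac{1}{2}b=\frac{a+b}{2}$, and $\lambda ^{2}=\left( 1-\lambda \right) ^{2}=\frac{1}{4}$. Next I would substitute $\theta =1$ into the left-hand side of (\ref{2-2}): the coefficient $1-\theta$ vanishes, so the term $\left( 1-\theta \right) \left( \lambda f(a)+\left( 1-\lambda \right) f(b)\right) $ disappears, and the term $\theta f(C)$ reduces to $f\left( \frac{a+b}{2}\right) $. Hence the left-hand side becomes exactly $\left\vert f\left( \frac{a+b}{2}\right) -\frac{1}{b-a}\int_{a}^{b}f(x)dx\right\vert $.

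Then I would simplify the constant factor: with $\theta =1$ we have $\theta ^{p+1}+\left( 1-\theta \right) ^{p+1}=1+0=1$, so $\left( \frac{\theta ^{p+1}+\left( 1-\theta \right) ^{p+1}}{p+1}\right) ^{\frac{1}{p}}=\left( \frac{1}{p+1}\right) ^{\frac{1}{p}}$. Finally, inserting $\lambda ^{2}=\left( 1-\lambda \right) ^{2}=\frac{1}{4}$ and $C=\frac{a+b}{2}$ into the bracketed sum on the right-hand side of (\ref{2-2}) and pulling the common factor $\frac{1}{4}$ out front turns the prefactor $(b-a)$ into $\frac{b-a}{4}$ and leaves precisely the two summands $\left( \frac{\left\vert f^{\prime }\left( \frac{a+b}{2}\right) \right\vert ^{q}+\left\vert f^{\prime }\left( a\right) \right\vert ^{q}}{s+1}\right) ^{\frac{1}{q}}$ and $\left( \frac{\left\vert f^{\prime }\left( \frac{a+b}{2}\right) \right\vert ^{q}+\left\vert f^{\prime }\left( b\right) \right\vert ^{q}}{s+1}\right) ^{\frac{1}{q}}$ that appear in (\ref{2-3}).

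There is no real obstacle here: every step is a routine substitution and the only thing to verify is that the collected expression matches (\ref{2-3}) term by term, which it does. If one wanted extra care, the single point worth a remark is that $\theta =1$ and $\lambda =\frac{1}{2}$ both lie in $[0,1]$, so Theorem \ref{2.2} is indeed applicable, and that $q>1$ with $\frac{1}{p}+\frac{1}{q}=1$ is inherited verbatim from the hypotheses of Theorem \ref{2.2}.
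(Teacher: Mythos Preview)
Your proposal is correct and follows exactly the approach implicit in the paper: the corollary is stated without proof because it is obtained by direct substitution of $\lambda=\tfrac{1}{2}$ and $\theta=1$ into inequality (\ref{2-2}) of Theorem \ref{2.2}, precisely as you describe.
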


\begin{remark}
We note that the inequality (\ref{2-3}) is better than the inequality in 
\cite[Theorem 2.3]{ADK11}. Because, by inequality 
\begin{equation*}
2^{s-1}\left\vert f^{\prime }\left( \frac{a+b}{2}\right) \right\vert
^{q}\leq \frac{\left\vert f^{\prime }\left( a\right) \right\vert
^{q}+\left\vert f^{\prime }\left( b\right) \right\vert ^{q}}{s+1}
\end{equation*}%
we have%
\begin{eqnarray*}
\left\vert f\left( \frac{a+b}{2}\right) -\frac{1}{b-a}\dint%
\limits_{a}^{b}f(x)dx\right\vert &\leq &\left( \frac{b-a}{4}\right) \left( 
\frac{1}{p+1}\right) ^{\frac{1}{p}}\left( \frac{1}{s+1}\right) ^{\frac{2}{q}}
\\
&&\times \left[ \left( \left( 2^{1-s}+s+1\right) \left\vert f^{\prime
}\left( a\right) \right\vert ^{q}+2^{1-s}\left\vert f^{\prime }\left(
b\right) \right\vert ^{q}\right) ^{\frac{1}{q}}\right. \\
&&+\left. \left( \left( 2^{1-s}+s+1\right) \left\vert f^{\prime }\left(
b\right) \right\vert ^{q}+2^{1-s}\left\vert f^{\prime }\left( a\right)
\right\vert ^{q}\right) ^{\frac{1}{q}}\right] ,
\end{eqnarray*}%
which is the same of the inequality in \cite[Theorem 2.3]{ADK11}.
\end{remark}

\begin{corollary}
Under the assumptions of Theorem \ref{2.2} with $\lambda =\frac{1}{2}$ and $%
\theta =0,$ then we have the following trapezoid type inequality%
\begin{eqnarray}
&&\left\vert \frac{f\left( a\right) +f\left( b\right) }{2}-\frac{1}{b-a}%
\dint\limits_{a}^{b}f(x)dx\right\vert \leq \frac{b-a}{4}\left( \frac{1}{p+1}%
\right) ^{\frac{1}{p}}  \label{2-4} \\
&&\times \left\{ \left( \frac{\left\vert f^{\prime }\left( \frac{a+b}{2}%
\right) \right\vert ^{q}+\left\vert f^{\prime }\left( a\right) \right\vert
^{q}}{s+1}\right) ^{\frac{1}{q}}+\left( \frac{\left\vert f^{\prime }\left( 
\frac{a+b}{2}\right) \right\vert ^{q}+\left\vert f^{\prime }\left( b\right)
\right\vert ^{q}}{s+1}\right) ^{\frac{1}{q}}\right\} .  \notag
\end{eqnarray}%
We note that the obtained inequality (\ref{2-4}) is better than the first
inequality in \cite[Theorem 3]{KBOP07}.
\end{corollary}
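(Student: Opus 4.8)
The plan is to derive (\ref{2-4}) by specializing Theorem \ref{2.2} to the values $\lambda = \frac{1}{2}$ and $\theta = 0$, and then to verify the concluding comparison with the first inequality of \cite[Theorem 3]{KBOP07} along the same lines as the Remark following (\ref{2-3}).

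First I would substitute $\theta = 0$ into the inequality (\ref{2-2}). Since $1-\theta = 1$, the term $\theta f\bigl((1-\lambda)a + \lambda b\bigr)$ disappears from the left-hand side, which thereby becomes $\left| \lambda f(a) + (1-\lambda) f(b) - \frac{1}{b-a}\int_a^b f(x)\,dx\right|$, and the scalar factor $\left(\frac{\theta^{p+1} + (1-\theta)^{p+1}}{p+1}\right)^{1/p}$ collapses to $\left(\frac{1}{p+1}\right)^{1/p}$. Next I would set $\lambda = \frac{1}{2}$, so that $C = (1-\lambda)a + \lambda b = \frac{a+b}{2}$ and $\lambda^2 = (1-\lambda)^2 = \frac{1}{4}$; factoring the common $\frac{1}{4}$ out of the bracket and absorbing it into the constant multiplying $(b-a)$ turns the right-hand side of (\ref{2-2}) into precisely the right-hand side displayed in (\ref{2-4}). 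This establishes (\ref{2-4}).

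For the closing assertion, I would argue exactly as in the Remark after (\ref{2-3}): applying the left inequality in (\ref{1-3}) to the $s$-convex function $|f'|^q$ yields $2^{s-1}\left| f'\bigl(\frac{a+b}{2}\bigr)\right|^q \le \frac{|f'(a)|^q + |f'(b)|^q}{s+1}$, and replacing each occurrence of $\left| f'\bigl(\frac{a+b}{2}\bigr)\right|^q$ on the right of (\ref{2-4}) by the (larger) quantity $2^{1-s}\frac{|f'(a)|^q + |f'(b)|^q}{s+1}$ only enlarges that right-hand side while reproducing the first inequality of \cite[Theorem 3]{KBOP07}; hence (\ref{2-4}) is the sharper estimate.

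I do not anticipate any genuine obstacle: the inequality is obtained by a direct substitution into the already-proved Theorem \ref{2.2}. The only points demanding a little care are the routine bookkeeping that brings the constant into the form $\frac{b-a}{4}$ and, in the comparison step, ensuring that the midpoint bound is invoked in the direction that weakens (\ref{2-4}) towards the \cite{KBOP07} form rather than strengthening it.
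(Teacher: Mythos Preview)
Your proposal is correct and mirrors the paper's approach: the corollary is obtained by direct substitution of $\lambda=\tfrac12$ and $\theta=0$ into Theorem~\ref{2.2}, and the comparison with \cite[Theorem~3]{KBOP07} follows the same midpoint-bound argument as the Remark after (\ref{2-3}). One tiny wording fix: the bound $2^{s-1}\bigl|f'(\tfrac{a+b}{2})\bigr|^{q}\le \frac{|f'(a)|^{q}+|f'(b)|^{q}}{s+1}$ actually uses the full chain in (\ref{1-3}), not just its left half.
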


\begin{theorem}
\label{2.4}Let $f:I\subseteq \left[ 0,\infty \right) \mathbb{\rightarrow R}$
be a differentiable mapping on $I^{\circ }$ such that $f^{\prime }\in L[a,b]$%
, where $a,b\in I^{\circ }$ with $a<b$ and $\alpha ,\lambda \in \left[ 0,1%
\right] $. If $\left\vert f^{\prime }\right\vert ^{q}$ is $s-$concave on $%
[a,b]$, $q>1,$ then the following inequality holds:%
\begin{eqnarray}
&&\left\vert \left( 1-\theta \right) \left( \lambda f(a)+\left( 1-\lambda
\right) f(b)\right) +\theta f(\left( 1-\lambda \right) a+\lambda b)-\frac{1}{%
b-a}\dint\limits_{a}^{b}f(x)dx\right\vert  \label{2-5} \\
&\leq &\left( b-a\right) \left( \frac{1}{2}\right) ^{\frac{1-s}{q}}\left( 
\frac{\theta ^{p+1}+\left( 1-\theta \right) ^{p+1}}{p+1}\right) ^{\frac{1}{p}%
}  \notag \\
&&\times \left\{ \lambda ^{2}\left\vert f^{\prime }\left( \frac{\left(
2-\lambda \right) a+\lambda b}{2}\right) \right\vert +\left( 1-\lambda
\right) ^{2}\left\vert f^{\prime }\left( \frac{\left( 1-\lambda \right)
a+\left( 1+\lambda \right) b}{2}\right) \right\vert \right\} ,  \notag
\end{eqnarray}%
where $C=\left( 1-\lambda \right) a+\lambda b$ and $1/p+1/q=1$.
\end{theorem}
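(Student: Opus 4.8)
The plan is to follow the template of the proof of Theorem \ref{2.2}, replacing only the $s$-convexity estimate on $\left\vert f^{\prime }\right\vert ^{q}$ by the reversed Hadamard inequality valid for $s$-concave functions. Write $C=\left( 1-\lambda \right) a+\lambda b$, and suppose first that $\lambda \in \left( 0,1\right) $, so that $a<C<b$. Starting from Lemma \ref{1.1} and taking absolute values, the left-hand side of (\ref{2-5}) is bounded above by
\begin{equation*}
\left( b-a\right) \left[ \lambda ^{2}\dint\limits_{0}^{1}\left\vert t-\theta \right\vert \left\vert f^{\prime }\left( ta+\left( 1-t\right) C\right) \right\vert dt+\left( 1-\lambda \right) ^{2}\dint\limits_{0}^{1}\left\vert t-\theta \right\vert \left\vert f^{\prime }\left( tb+\left( 1-t\right) C\right) \right\vert dt\right] .
\end{equation*}
To each of the two integrals I would apply H\"{o}lder's inequality with exponents $p$ and $q$, the first factor being $\left\vert t-\theta \right\vert $; this splits off $\left( \dint_{0}^{1}\left\vert t-\theta \right\vert ^{p}dt\right) ^{1/p}$ and leaves $\left( \dint_{0}^{1}\left\vert f^{\prime }\left( ta+\left( 1-t\right) C\right) \right\vert ^{q}dt\right) ^{1/q}$ (respectively with $b$ in place of $a$), exactly as in (\ref{2-2a}).

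The second step is to bound the two $q$-th power integrals using $s$-concavity. Substituting $u=ta+\left( 1-t\right) C$ gives
\begin{equation*}
\dint\limits_{0}^{1}\left\vert f^{\prime }\left( ta+\left( 1-t\right) C\right) \right\vert ^{q}dt=\frac{1}{C-a}\dint\limits_{a}^{C}\left\vert f^{\prime }(u)\right\vert ^{q}du,
\end{equation*}
and since $\left\vert f^{\prime }\right\vert ^{q}$ is $s$-concave on $[a,b]\supseteq \lbrack a,C]$, the first inequality in (\ref{1-3}), taken in its reversed form (which is the one valid for $s$-concave functions), gives
\begin{equation*}
\frac{1}{C-a}\dint\limits_{a}^{C}\left\vert f^{\prime }(u)\right\vert ^{q}du\leq 2^{s-1}\left\vert f^{\prime }\left( \frac{a+C}{2}\right) \right\vert ^{q}=2^{s-1}\left\vert f^{\prime }\left( \frac{\left( 2-\lambda \right) a+\lambda b}{2}\right) \right\vert ^{q}.
\end{equation*}
The same substitution $u=tb+\left( 1-t\right) C$ on $[C,b]$ yields the bound $2^{s-1}\left\vert f^{\prime }\left( \frac{\left( 1-\lambda \right) a+\left( 1+\lambda \right) b}{2}\right) \right\vert ^{q}$. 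Taking $q$-th roots turns the constant $2^{s-1}$ into $\left( 1/2\right) ^{(1-s)/q}$.

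Finally I would substitute the elementary value $\dint_{0}^{1}\left\vert t-\theta \right\vert ^{p}dt=\frac{\theta ^{p+1}+\left( 1-\theta \right) ^{p+1}}{p+1}$ — already isolated as (\ref{2-2d}) — together with the two midpoint bounds into the displayed expression above; pulling the common factor $\left( 1/2\right) ^{(1-s)/q}\left( \frac{\theta ^{p+1}+\left( 1-\theta \right) ^{p+1}}{p+1}\right) ^{1/p}$ out front produces precisely (\ref{2-5}). For the degenerate cases $\lambda \in \{0,1\}$ one of the two terms vanishes and the other collapses to the value of $\left\vert f^{\prime }\right\vert $ at the midpoint of $[a,b]$, which is exactly what the stated formula returns, so those need no separate treatment.

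The only genuinely delicate point is the direction of the Hadamard inequality in the second step: it must be invoked in its $s$-concave (reversed) form, and one has to check that the sub-intervals $[a,C]$ and $[C,b]$ really lie inside $[a,b]$ so that $s$-concavity of $\left\vert f^{\prime }\right\vert ^{q}$ is inherited there — both facts being immediate from $\lambda \in \left[ 0,1\right] $. Beyond that there is no analytic obstacle; the argument is a routine adaptation of the proof of Theorem \ref{2.2}, and the substance is merely the bookkeeping identifying $\frac{a+C}{2}$ with $\frac{\left( 2-\lambda \right) a+\lambda b}{2}$ and $\frac{C+b}{2}$ with $\frac{\left( 1-\lambda \right) a+\left( 1+\lambda \right) b}{2}$.
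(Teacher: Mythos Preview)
Your proposal is correct and follows essentially the same route as the paper: both arguments proceed exactly as in the proof of Theorem~\ref{2.2}, applying H\"{o}lder's inequality to reach (\ref{2-2a}), then replacing the $s$-convexity bounds (\ref{2-2b})--(\ref{2-2c}) by the reversed Hermite--Hadamard inequality (\ref{1-3}) for the $s$-concave function $\left\vert f^{\prime}\right\vert^{q}$ on the subintervals $[a,C]$ and $[C,b]$, and finally inserting (\ref{2-2d}). Your handling of the degenerate cases $\lambda\in\{0,1\}$ is marginally more explicit than the paper's, but the substance is identical.
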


\begin{proof}
Suppose that $C=\left( 1-\lambda \right) a+\lambda b.$ We proceed similarly
as in the proof Theorem \ref{2.2}. Since $\left\vert f^{\prime }\right\vert
^{q}$ is $s-$concave on $[a,b],$ $\lambda \in \left( 0,1\right] $ by the
inequality (\ref{1-3}), we get%
\begin{eqnarray}
&&\frac{1}{C-a}\dint\limits_{a}^{C}\left\vert f^{\prime }\left( x\right)
\right\vert ^{q}dx  \notag \\
&=&\dint\limits_{0}^{1}\left\vert f^{\prime }\left( ta+\left( 1-t\right)
C\right) \right\vert ^{q}dt\leq \frac{1}{2^{1-s}}\left\vert f^{\prime
}\left( \frac{\left( 2-\lambda \right) a+\lambda b}{2}\right) \right\vert
^{q}  \label{2-5a}
\end{eqnarray}%
the inequality (\ref{2-5a}) also holds $\lambda =0$ too. Similarly, for $%
\lambda \in \left[ 0,1\right) $ by the inequality (\ref{1-3}), we have%
\begin{eqnarray}
&&\frac{1}{b-C}\dint\limits_{C}^{b}\left\vert f^{\prime }\left( x\right)
\right\vert ^{q}dx  \notag \\
&=&\dint\limits_{0}^{1}\left\vert f^{\prime }\left( tb+\left( 1-t\right)
C\right) \right\vert ^{q}dt\leq \frac{1}{2^{1-s}}\left\vert f^{\prime
}\left( \frac{\left( 1-\lambda \right) a+\left( 1+\lambda \right) b}{2}%
\right) \right\vert ^{q}  \label{2-5b}
\end{eqnarray}%
the inequality (\ref{2-5b}) also holds $\lambda =1$ too. Thus using (\ref%
{2-2d}), (\ref{2-5a}) and (\ref{2-5b}) in (\ref{2-2a}), we obtain the
inequality (\ref{2-5}). This completes the proof.
\end{proof}

\begin{corollary}
\label{2.5}Under the assumptions of Theorem \ref{2.4} with $s=1$, we have%
\begin{eqnarray*}
&&\left\vert \left( 1-\theta \right) \left( \lambda f(a)+\left( 1-\lambda
\right) f(b)\right) +\theta f(\left( 1-\lambda \right) a+\lambda b)-\frac{1}{%
b-a}\dint\limits_{a}^{b}f(x)dx\right\vert \\
&\leq &\left( b-a\right) \left( \frac{\theta ^{p+1}+\left( 1-\theta \right)
^{p+1}}{p+1}\right) ^{\frac{1}{p}} \\
&&\times \left\{ \lambda ^{2}\left\vert f^{\prime }\left( \frac{\left(
2-\lambda \right) a+\lambda b}{2}\right) \right\vert +\left( 1-\lambda
\right) ^{2}\left\vert f^{\prime }\left( \frac{\left( 1-\lambda \right)
a+\left( 1+\lambda \right) b}{2}\right) \right\vert \right\} .
\end{eqnarray*}
\end{corollary}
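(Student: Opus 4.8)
The plan is to obtain this corollary as the immediate specialization $s=1$ of Theorem \ref{2.4}; no genuinely new argument is needed. First I would note that when $s=1$ the hypothesis that $\left\vert f^{\prime }\right\vert ^{q}$ is $s$-concave on $[a,b]$ is precisely the statement that $\left\vert f^{\prime }\right\vert ^{q}$ is concave on $[a,b]$, so the hypotheses of Theorem \ref{2.4} are met. Then I would substitute $s=1$ into the right-hand side of the inequality (\ref{2-5}).

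The only quantity appearing in (\ref{2-5}) that depends explicitly on $s$ (apart from the concavity hypothesis itself) is the prefactor $\left( \frac{1}{2}\right) ^{\frac{1-s}{q}}$. Putting $s=1$ gives $\left( \frac{1}{2}\right) ^{0}=1$, so this factor disappears. The remaining factors, namely $(b-a)$, $\left( \frac{\theta ^{p+1}+(1-\theta )^{p+1}}{p+1}\right) ^{1/p}$, and the bracketed sum of the two terms $\lambda ^{2}\left\vert f^{\prime }\left( \frac{(2-\lambda )a+\lambda b}{2}\right) \right\vert $ and $(1-\lambda )^{2}\left\vert f^{\prime }\left( \frac{(1-\lambda )a+(1+\lambda )b}{2}\right) \right\vert $, are all independent of $s$ and reproduce verbatim the right-hand side displayed in the corollary. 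Hence the inequality follows at once.

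I do not expect any real obstacle here: the only thing that could conceivably need checking is that the endpoint values $\lambda =0$ and $\lambda =1$ cause no trouble, but these are already handled inside the proof of Theorem \ref{2.4} (the estimates (\ref{2-5a}) and (\ref{2-5b}) are asserted there to remain valid at $\lambda =0$ and $\lambda =1$ respectively). Alternatively one could rederive the bound directly by combining Lemma \ref{1.1}, H\"{o}lder's inequality with conjugate exponents $p,q$, and the upper Hermite--Hadamard estimate (\ref{1-3}) specialized to $s=1$ (which reduces to the classical estimate $\frac{1}{C-a}\int_{a}^{C}\left\vert f^{\prime }\left( x\right) \right\vert ^{q}dx\leq \left\vert f^{\prime }\left( \frac{a+C}{2}\right) \right\vert ^{q}$ for concave $\left\vert f^{\prime }\right\vert ^{q}$), but that simply retraces the proof of Theorem \ref{2.4} with $s=1$ and yields no simplification.
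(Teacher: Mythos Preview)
Your proposal is correct and matches the paper's approach: the corollary is stated without proof in the paper, being the immediate specialization $s=1$ of Theorem \ref{2.4}, and your observation that the only $s$-dependent factor $\left(\tfrac{1}{2}\right)^{(1-s)/q}$ collapses to $1$ is exactly the point. One tiny terminological quibble: the estimate you invoke at the end is the \emph{left} inequality in (\ref{1-3}) reversed for $s$-concave functions, not the ``upper'' one, but the inequality you write down is correct.
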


\begin{remark}
In Corollary \ref{2.5}, if we take $\lambda =\frac{1}{2}$ and $\theta =0,$
then we have the following trapezoid type inequality.%
\begin{eqnarray*}
&&\left\vert \frac{f\left( a\right) +f\left( b\right) }{2}-\frac{1}{b-a}%
\dint\limits_{a}^{b}f(x)dx\right\vert \\
&\leq &\frac{b-a}{4}\left( \frac{1}{p+1}\right) ^{\frac{1}{p}}\left[
\left\vert f^{\prime }\left( \frac{3b+a}{4}\right) \right\vert +\left\vert
f^{\prime }\left( \frac{3a+b}{4}\right) \right\vert \right]
\end{eqnarray*}%
which is the same of the inequality in \cite[Theorem 2]{KBOP07}
\end{remark}

\begin{remark}
In Corollary \ref{2.5}, if we take $\lambda =\frac{1}{2}$ and $\theta =1,$
then we have the following midpoint type inequality%
\begin{eqnarray*}
&&\left\vert f\left( \frac{a+b}{2}\right) -\frac{1}{b-a}\dint%
\limits_{a}^{b}f(x)dx\right\vert \\
&\leq &\frac{b-a}{4}\left( \frac{1}{p+1}\right) ^{\frac{1}{p}}\left[
\left\vert f^{\prime }\left( \frac{3b+a}{4}\right) \right\vert +\left\vert
f^{\prime }\left( \frac{3a+b}{4}\right) \right\vert \right]
\end{eqnarray*}%
which is the same of the inequality in \cite[Theorem 2.5]{ADK11}.
\end{remark}

\begin{corollary}
Under the assumptions of Theorem \ref{2.4} with $\theta =0,$ we have the
following generalized trapezoid type inequality%
\begin{eqnarray*}
&&\left\vert \lambda f(a)+\left( 1-\lambda \right) f(b)-\frac{1}{b-a}%
\dint\limits_{a}^{b}f(x)dx\right\vert \\
&\leq &\left( b-a\right) \left( \frac{1}{2}\right) ^{\frac{1-s}{q}}\left( 
\frac{1}{p+1}\right) ^{\frac{1}{p}} \\
&&\times \left\{ \lambda ^{2}\left\vert f^{\prime }\left( \frac{\left(
2-\lambda \right) a+\lambda b}{2}\right) \right\vert +\left( 1-\lambda
\right) ^{2}\left\vert f^{\prime }\left( \frac{\left( 1-\lambda \right)
a+\left( 1+\lambda \right) b}{2}\right) \right\vert \right\} ,
\end{eqnarray*}
\end{corollary}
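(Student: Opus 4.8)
The plan is to derive this corollary as a direct specialization of Theorem~\ref{2.4} to the parameter value $\theta = 0$. First I would check that the hypotheses match: $f$ is differentiable on $I^{\circ}$ with $f^{\prime}\in L[a,b]$, the quantity $\left\vert f^{\prime}\right\vert ^{q}$ is $s$-concave on $[a,b]$, $q>1$, and $\lambda\in[0,1]$ — these are exactly the standing assumptions of Theorem~\ref{2.4}, with $\theta$ now frozen at the admissible endpoint value $0\in[0,1]$. Hence inequality (\ref{2-5}) applies verbatim with $\theta=0$.

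The second step is the substitution itself. On the left-hand side of (\ref{2-5}), the expression $(1-\theta)\left(\lambda f(a)+(1-\lambda)f(b)\right)+\theta f\left((1-\lambda)a+\lambda b\right)$ collapses, at $\theta=0$, to $\lambda f(a)+(1-\lambda)f(b)$, producing the generalized trapezoid functional on the left of the claimed inequality. On the right-hand side, the only $\theta$-dependent factor is $\left(\frac{\theta^{p+1}+(1-\theta)^{p+1}}{p+1}\right)^{1/p}$; using $0^{p+1}=0$ and $1^{p+1}=1$ this becomes $\left(\frac{1}{p+1}\right)^{1/p}$. The remaining factors $(b-a)\left(\tfrac12\right)^{(1-s)/q}$ and the bracketed sum $\lambda^{2}\left\vert f^{\prime}\left(\frac{(2-\lambda)a+\lambda b}{2}\right)\right\vert+(1-\lambda)^{2}\left\vert f^{\prime}\left(\frac{(1-\lambda)a+(1+\lambda)b}{2}\right)\right\vert$ do not involve $\theta$ and are carried over unchanged. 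Reassembling these pieces gives precisely the asserted estimate.

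I do not expect any genuine obstacle here: the corollary is nothing more than the evaluation of an already-established inequality at a boundary value of one free parameter, so no new convexity estimate, integration by parts, or application of Hölder's inequality is required beyond what was already done in proving Theorem~\ref{2.4}. The only minor points deserving attention are confirming that $\theta=0$ is permitted (it is, since $\theta\in[0,1]$ in Theorem~\ref{2.4}) and performing the elementary simplification of the constant $\left(\frac{\theta^{p+1}+(1-\theta)^{p+1}}{p+1}\right)^{1/p}$ at $\theta=0$ correctly. One could, if desired, also note that the result of the substitution is the $s$-concave analogue of the trapezoid corollary obtained from Theorem~\ref{2.2}, which serves as a sanity check on the form of the constant.
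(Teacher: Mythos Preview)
Your proposal is correct and matches the paper's approach exactly: the corollary is stated in the paper without proof, since it is nothing more than the direct specialization of Theorem~\ref{2.4} at $\theta=0$, which is precisely the substitution and simplification you describe. There is nothing to add.
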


\begin{corollary}
Under the assumptions of Theorem \ref{2.4} with $\theta =1,$ we have the
following generalized midpoint type inequality%
\begin{eqnarray*}
&&\left\vert f(\left( 1-\lambda \right) a+\lambda b)-\frac{1}{b-a}%
\dint\limits_{a}^{b}f(x)dx\right\vert \\
&\leq &\left( b-a\right) \left( \frac{1}{2}\right) ^{\frac{1-s}{q}}\left( 
\frac{1}{p+1}\right) ^{\frac{1}{p}} \\
&&\times \left\{ \lambda ^{2}\left\vert f^{\prime }\left( \frac{\left(
2-\lambda \right) a+\lambda b}{2}\right) \right\vert +\left( 1-\lambda
\right) ^{2}\left\vert f^{\prime }\left( \frac{\left( 1-\lambda \right)
a+\left( 1+\lambda \right) b}{2}\right) \right\vert \right\} ,
\end{eqnarray*}
\end{corollary}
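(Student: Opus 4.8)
The plan is to obtain this corollary as an immediate specialization of Theorem~\ref{2.4}, by setting $\theta=1$ in the inequality (\ref{2-5}); this is legitimate since $1\in[0,1]$, so every hypothesis of Theorem~\ref{2.4} (differentiability, $f'\in L[a,b]$, $s$-concavity of $\left\vert f'\right\vert^q$, $q>1$) remains in force. First I would examine the left-hand side of (\ref{2-5}): with $\theta=1$ the factor $1-\theta$ vanishes, so the term $\left(1-\theta\right)\left(\lambda f(a)+\left(1-\lambda\right)f(b)\right)$ disappears entirely, while the remaining term $\theta f\left(\left(1-\lambda\right)a+\lambda b\right)$ reduces to $f\left(\left(1-\lambda\right)a+\lambda b\right)$. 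Hence the left-hand side of (\ref{2-5}) collapses to $\left\vert f\left(\left(1-\lambda\right)a+\lambda b\right)-\frac{1}{b-a}\int_{a}^{b}f(x)\,dx\right\vert$, which is exactly the quantity on the left of the asserted inequality.

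Next I would simplify the constant on the right-hand side of (\ref{2-5}). The only place $\theta$ enters there is through the factor $\left(\frac{\theta^{p+1}+\left(1-\theta\right)^{p+1}}{p+1}\right)^{1/p}$; evaluating at $\theta=1$ gives $\theta^{p+1}=1$ and $\left(1-\theta\right)^{p+1}=0$, so this factor becomes $\left(\frac{1}{p+1}\right)^{1/p}$. Everything else in the bound of Theorem~\ref{2.4} — the prefactor $(b-a)\left(\tfrac12\right)^{(1-s)/q}$ and the bracketed sum $\lambda^{2}\left\vert f^{\prime}\left(\frac{(2-\lambda)a+\lambda b}{2}\right)\right\vert+\left(1-\lambda\right)^{2}\left\vert f^{\prime}\left(\frac{(1-\lambda)a+(1+\lambda)b}{2}\right)\right\vert$ — is independent of $\theta$ and is carried over verbatim. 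Combining the two observations produces precisely the claimed inequality.

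I do not expect any genuine obstacle: the argument is a one-line substitution into a theorem already proved. (Alternatively, one could re-derive it from scratch by invoking Lemma~\ref{1.1} with $\theta=1$, noting that $\left\vert t-1\right\vert=1-t$ on $[0,1]$, and then repeating the H\"older and $s$-concavity estimates (\ref{2-2d}), (\ref{2-5a}), (\ref{2-5b}) of Theorem~\ref{2.4}; but the specialization route is cleaner.) The only point requiring care is the bookkeeping of the two arguments of $f'$, namely $\frac{(2-\lambda)a+\lambda b}{2}$ and $\frac{(1-\lambda)a+(1+\lambda)b}{2}$, which are the midpoints of $[a,C]$ and $[C,b]$ with $C=(1-\lambda)a+\lambda b$; this is immediate from the statement of Theorem~\ref{2.4} and needs no recomputation.
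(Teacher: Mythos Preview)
Your proposal is correct and matches the paper's approach exactly: the corollary is stated in the paper without a separate proof, being an immediate specialization of Theorem~\ref{2.4} obtained by substituting $\theta=1$ into inequality~(\ref{2-5}). Your bookkeeping of the left-hand side, the factor $\left(\frac{\theta^{p+1}+(1-\theta)^{p+1}}{p+1}\right)^{1/p}\to\left(\frac{1}{p+1}\right)^{1/p}$, and the $\theta$-independent terms is all accurate.
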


\begin{corollary}
Under the assumptions of Theorem \ref{2.2} with $\theta =1,$ we have the
following Ostrowski type inequality%
\begin{eqnarray}
&&\left\vert f(x)-\frac{1}{b-a}\dint\limits_{a}^{b}f(u)du\right\vert  \notag
\\
&\leq &\frac{2^{\frac{s-1}{q}}}{\left( p+1\right) ^{\frac{1}{p}}\left(
b-a\right) }\left[ \left( x-a\right) ^{2}\left\vert f^{\prime }\left( \frac{%
x+a}{2}\right) \right\vert +\left( b-x\right) ^{2}\left\vert f^{\prime
}\left( \frac{x+b}{2}\right) \right\vert \right]  \label{2-5c}
\end{eqnarray}%
for each $x\in \left[ a,b\right] .$
\end{corollary}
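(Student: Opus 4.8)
The plan is to specialize, to an arbitrary point $x\in[a,b]$, the generalized midpoint type inequality that comes out of Theorem~\ref{2.4} by setting $\theta=1$ in (\ref{2-5}); this mirrors exactly the way the earlier Ostrowski type corollaries were deduced from Theorems~\ref{2.1} and~\ref{2.2}. (Note that the constant $2^{(s-1)/q}$ and the half-point arguments $\tfrac{x+a}{2},\tfrac{x+b}{2}$ identify this as the $s$-concave case, so the relevant hypothesis is that $|f'|^q$ be $s$-concave on $[a,b]$, i.e. the hypotheses of Theorem~\ref{2.4}.)

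First I would fix $x\in[a,b]$ and, exactly as in the proofs of the previous Ostrowski corollaries, write $x=(1-\lambda_x)a+\lambda_x b$ with $\lambda_x=\frac{x-a}{b-a}\in[0,1]$, hence $1-\lambda_x=\frac{b-x}{b-a}$. Putting $\theta=1$ in (\ref{2-5}) and using $\theta^{p+1}+(1-\theta)^{p+1}=1$ reduces the outer constant to $(b-a)\left(\tfrac12\right)^{\frac{1-s}{q}}\left(\tfrac{1}{p+1}\right)^{\frac1p}$, so that the right-hand side becomes that constant times $\lambda_x^2\,\bigl|f'(\tfrac{(2-\lambda_x)a+\lambda_x b}{2})\bigr|+(1-\lambda_x)^2\,\bigl|f'(\tfrac{(1-\lambda_x)a+(1+\lambda_x)b}{2})\bigr|$.

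The remaining work is purely algebraic bookkeeping: (i) $\left(\tfrac12\right)^{\frac{1-s}{q}}=2^{\frac{s-1}{q}}$; (ii) $\lambda_x^2=\frac{(x-a)^2}{(b-a)^2}$ and $(1-\lambda_x)^2=\frac{(b-x)^2}{(b-a)^2}$, so the single factor $b-a$ in front combines with these denominators to leave $\frac{1}{b-a}$ multiplying the bracket; and (iii) since $\lambda_x(b-a)=x-a$, one has $\frac{(2-\lambda_x)a+\lambda_x b}{2}=\frac{2a+\lambda_x(b-a)}{2}=\frac{x+a}{2}$ and likewise $\frac{(1-\lambda_x)a+(1+\lambda_x)b}{2}=\frac{(a+b)+\lambda_x(b-a)}{2}=\frac{x+b}{2}$. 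Substituting (i)--(iii) yields exactly (\ref{2-5c}).

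I do not anticipate any genuine difficulty; the only things to watch are keeping track of which endpoint-average $\tfrac{x+a}{2}$ or $\tfrac{x+b}{2}$ is paired with which squared weight $(x-a)^2$ or $(b-x)^2$, and making sure the boundary cases $\lambda_x=0$ and $\lambda_x=1$ (that is, $x=a$ or $x=b$) are legitimate — but these are already handled inside the proof of Theorem~\ref{2.4}, which explicitly notes that (\ref{2-5a}) holds at $\lambda=0$ and (\ref{2-5b}) at $\lambda=1$.
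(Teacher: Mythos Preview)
Your proposal is correct and follows exactly the paper's own argument: fix $x\in[a,b]$, write $x=(1-\lambda_x)a+\lambda_x b$ with $\lambda_x=\frac{x-a}{b-a}$, set $\theta=1$ in (\ref{2-5}), and simplify. You are also right that the corollary's stated hypothesis ``assumptions of Theorem~\ref{2.2}'' is a misprint for Theorem~\ref{2.4}; the paper's proof itself invokes (\ref{2-5}), confirming this.
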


\begin{proof}
For each $x\in \left[ a,b\right] $, there exist $\lambda _{x}\in \left[ 0,1%
\right] $ such that $x=\left( 1-\lambda _{x}\right) a+\lambda _{x}b.$ Hence
we have $\lambda _{x}=\frac{x-a}{b-a}$ and $1-\lambda _{x}=\frac{b-x}{b-a}.$
Therefore for each $x\in \left[ a,b\right] ,$ from the inequality (\ref{2-5}%
) we obtain the inequality (\ref{2-5c}).
\end{proof}

\begin{remark}
We note that the inequality (\ref{2-5c}) is the same of the inequality in 
\cite[Theorem 5]{ADDC10}.
\end{remark}

\begin{corollary}
Under the assumptions of Theorem \ref{2.4} with $\lambda =\frac{1}{2}$ and $%
\theta =0,$ we have the following trapezoid type inequality%
\begin{eqnarray}
&&\left\vert \frac{f\left( a\right) +f\left( b\right) }{2}-\frac{1}{b-a}%
\dint\limits_{a}^{b}f(x)dx\right\vert  \label{2-6} \\
&\leq &\frac{b-a}{4}\left( \frac{1}{p+1}\right) ^{\frac{1}{p}}\left( \frac{1%
}{2}\right) ^{\frac{1-s}{q}}\left[ \left\vert f^{\prime }\left( \frac{3b+a}{4%
}\right) \right\vert +\left\vert f^{\prime }\left( \frac{3a+b}{4}\right)
\right\vert \right]  \notag
\end{eqnarray}%
which is the same of the inequality in \cite[Theorem 8 (i)]{P10}.
\end{corollary}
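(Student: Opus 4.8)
The plan is to derive \((\ref{2-6})\) as an immediate specialization of Theorem~\ref{2.4}: no new estimation is needed, only the substitution \(\lambda=\tfrac12\), \(\theta=0\) into inequality \((\ref{2-5})\) followed by simplification. First I would observe that the hypotheses listed in this corollary are exactly those of Theorem~\ref{2.4} (a differentiable \(f\) on \(I^{\circ}\) with \(f'\in L[a,b]\), \(|f'|^{q}\) \(s\)-concave on \([a,b]\), \(q>1\)), so inequality \((\ref{2-5})\) is available for the chosen values of \(\lambda\) and \(\theta\).

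Next I would reduce the left-hand side of \((\ref{2-5})\). Taking \(\theta=0\) annihilates the term \(\theta f((1-\lambda)a+\lambda b)\) and makes \(1-\theta=1\); taking \(\lambda=\tfrac12\) turns \(\lambda f(a)+(1-\lambda)f(b)\) into \(\tfrac{f(a)+f(b)}{2}\). Hence the left-hand side becomes exactly \(\bigl|\tfrac{f(a)+f(b)}{2}-\tfrac{1}{b-a}\int_{a}^{b}f(x)\,dx\bigr|\), the trapezoid functional appearing in \((\ref{2-6})\).

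Then I would reduce the right-hand side. With \(\theta=0\) the H\"older constant simplifies through \(\theta^{p+1}+(1-\theta)^{p+1}=1\) to \(\bigl(\tfrac{1}{p+1}\bigr)^{1/p}\), while the prefactor \((b-a)\bigl(\tfrac12\bigr)^{(1-s)/q}\) is unchanged. With \(\lambda=\tfrac12\) both weights \(\lambda^{2}\) and \((1-\lambda)^{2}\) equal \(\tfrac14\), so the brace becomes \(\tfrac14\bigl[\,|f'(\tfrac{(2-\lambda)a+\lambda b}{2})|+|f'(\tfrac{(1-\lambda)a+(1+\lambda)b}{2})|\,\bigr]\); substituting \(\lambda=\tfrac12\) into the two arguments gives \(\tfrac{(2-\lambda)a+\lambda b}{2}=\tfrac{3a+b}{4}\) and \(\tfrac{(1-\lambda)a+(1+\lambda)b}{2}=\tfrac{3b+a}{4}\). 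Pulling the common factor \(\tfrac14\) out of the brace and combining it with \((b-a)\) yields precisely the right-hand side of \((\ref{2-6})\).

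Since the whole argument is pure bookkeeping, there is no genuine obstacle; the only point needing a moment's care is verifying that \(\lambda=\tfrac12\) sends the two interior points \(\tfrac{(2-\lambda)a+\lambda b}{2}\) and \(\tfrac{(1-\lambda)a+(1+\lambda)b}{2}\) to the quarter-points \(\tfrac{3a+b}{4}\) and \(\tfrac{3b+a}{4}\) of \([a,b]\). To close, I would note, as the statement records, that the resulting bound coincides with the inequality in \cite[Theorem 8 (i)]{P10}.
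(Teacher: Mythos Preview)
Your proposal is correct and matches the paper's approach: the corollary is stated without a separate proof because it follows immediately from Theorem~\ref{2.4} by substituting $\lambda=\tfrac12$ and $\theta=0$ into \eqref{2-5}, exactly as you describe. The simplifications of the left-hand side, the H\"older constant, the weights $\lambda^2=(1-\lambda)^2=\tfrac14$, and the identification of the quarter-points $\tfrac{3a+b}{4}$ and $\tfrac{3b+a}{4}$ are all accurate.
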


\begin{corollary}
Under the assumptions of Theorem \ref{2.4} with $\lambda =\frac{1}{2}$ and $%
\theta =1,$ we have the following midpoint type inequality%
\begin{eqnarray}
&&\left\vert f\left( \frac{a+b}{2}\right) -\frac{1}{b-a}\dint%
\limits_{a}^{b}f(x)dx\right\vert  \label{2-7} \\
&\leq &\frac{b-a}{4}\left( \frac{1}{p+1}\right) ^{\frac{1}{p}}\left( \frac{1%
}{2}\right) ^{\frac{1-s}{q}}\left[ \left\vert f^{\prime }\left( \frac{3b+a}{4%
}\right) \right\vert +\left\vert f^{\prime }\left( \frac{3a+b}{4}\right)
\right\vert \right]  \notag
\end{eqnarray}%
which is the same of the inequality in \cite[Theorem 8 (ii)]{P10}.
\end{corollary}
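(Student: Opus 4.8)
The plan is to derive this midpoint-type inequality directly from Theorem~\ref{2.4} by specializing the free parameters. The final statement is the special case $\lambda=\tfrac12$, $\theta=1$ of inequality~(\ref{2-7})'s parent result, so essentially no new analysis is required; all the work has already been done in the proof of Theorem~\ref{2.4}, which itself rests on Lemma~\ref{1.1}, H\"older's inequality, and the Hermite--Hadamard inequality~(\ref{1-3}) for $s$-concave functions.

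First I would set $\lambda=\tfrac12$ and $\theta=1$ in inequality~(\ref{2-5}). With $\theta=1$ the prefactor $\left(\frac{\theta^{p+1}+(1-\theta)^{p+1}}{p+1}\right)^{1/p}$ collapses to $\left(\frac{1}{p+1}\right)^{1/p}$, and the left-hand side $\left|(1-\theta)\bigl(\lambda f(a)+(1-\lambda)f(b)\bigr)+\theta f((1-\lambda)a+\lambda b)-\frac{1}{b-a}\int_a^b f(x)\,dx\right|$ becomes simply $\left|f\left(\frac{a+b}{2}\right)-\frac{1}{b-a}\int_a^b f(x)\,dx\right|$, since $(1-\lambda)a+\lambda b=\frac{a+b}{2}$ when $\lambda=\tfrac12$.

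Next I would evaluate the two arguments of $f^{\prime}$ appearing in the brace on the right side of~(\ref{2-5}). At $\lambda=\tfrac12$ we have $\lambda^2=(1-\lambda)^2=\tfrac14$, and $\frac{(2-\lambda)a+\lambda b}{2}=\frac{\frac32 a+\frac12 b}{2}=\frac{3a+b}{4}$, while $\frac{(1-\lambda)a+(1+\lambda)b}{2}=\frac{\frac12 a+\frac32 b}{2}=\frac{a+3b}{4}$. Substituting these, the brace becomes $\frac14\left(\left|f^{\prime}\left(\frac{3a+b}{4}\right)\right|+\left|f^{\prime}\left(\frac{3b+a}{4}\right)\right|\right)$, and collecting the factor $b-a$ together with the $\left(\frac12\right)^{(1-s)/q}$ term gives exactly the claimed right-hand side $\frac{b-a}{4}\left(\frac{1}{p+1}\right)^{1/p}\left(\frac12\right)^{(1-s)/q}\left[\left|f^{\prime}\left(\frac{3b+a}{4}\right)\right|+\left|f^{\prime}\left(\frac{3a+b}{4}\right)\right|\right]$.

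There is essentially no obstacle here; this corollary is pure bookkeeping. The only point requiring the slightest care is verifying that the hypotheses of Theorem~\ref{2.4} are inherited --- namely that $\left|f^{\prime}\right|^q$ being $s$-concave on $[a,b]$ with $q>1$ is precisely what is assumed, and that the values $\frac{3a+b}{4}$ and $\frac{a+3b}{4}$ lie in $[a,b]$ so that~(\ref{1-3}) was legitimately applied in the parent proof (which it was, since both are convex combinations of $a$ and $b$). Finally I would remark that the resulting inequality~(\ref{2-7}) coincides with the one in \cite[Theorem 8 (ii)]{P10}.
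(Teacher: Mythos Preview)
Your proposal is correct and matches the paper's approach exactly: the corollary is stated without proof in the paper, as it follows immediately from inequality~(\ref{2-5}) of Theorem~\ref{2.4} by substituting $\lambda=\tfrac12$ and $\theta=1$, which is precisely the specialization you carry out. Your arithmetic on the prefactors and on the arguments $\frac{3a+b}{4}$, $\frac{a+3b}{4}$ is accurate, and no additional ideas are needed.
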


\begin{corollary}
Under the assumptions of Theorem \ref{2.4} with $\lambda =\frac{1}{2}$ and $%
\theta =\frac{2}{3},$ we have the following trapezoid type inequality%
\begin{eqnarray}
&&\left\vert \frac{1}{6}\left[ f(a)+4f\left( \frac{a+b}{2}\right) +f(b)%
\right] -\frac{1}{b-a}\dint\limits_{a}^{b}f(x)dx\right\vert  \label{2-8} \\
&\leq &\frac{b-a}{12}\left( \frac{1+2^{p+1}}{3\left( p+1\right) }\right) ^{%
\frac{1}{p}}2^{\frac{s-1}{q}}\left[ \left\vert f^{\prime }\left( \frac{3b+a}{%
4}\right) \right\vert +\left\vert f^{\prime }\left( \frac{3a+b}{4}\right)
\right\vert \right] .  \notag
\end{eqnarray}
\end{corollary}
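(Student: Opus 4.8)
The plan is simply to specialize Theorem \ref{2.4}, since the claimed estimate is precisely the case $\lambda=\frac12$, $\theta=\frac23$ of inequality (\ref{2-5}); no new analytic ingredient is needed beyond that theorem, only the verification that the substituted expressions collapse to the stated form.

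First I would compute the left-hand side of (\ref{2-5}) at these parameters. With $\lambda=\frac12$ one has $1-\lambda=\frac12$, so $\lambda f(a)+(1-\lambda)f(b)=\frac{f(a)+f(b)}{2}$ and $(1-\lambda)a+\lambda b=\frac{a+b}{2}$; with $\theta=\frac23$ one has $1-\theta=\frac13$, so the affine combination $(1-\theta)\frac{f(a)+f(b)}{2}+\theta f(\frac{a+b}{2})$ equals $\frac16 f(a)+\frac23 f(\frac{a+b}{2})+\frac16 f(b)=\frac16[f(a)+4f(\frac{a+b}{2})+f(b)]$, which is exactly the Simpson expression appearing in (\ref{2-8}).

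Next I would simplify the right-hand side. At $\lambda=\frac12$ we get $\lambda^2=(1-\lambda)^2=\frac14$, and the two interior nodes become $\frac{(2-\lambda)a+\lambda b}{2}=\frac{3a+b}{4}$ and $\frac{(1-\lambda)a+(1+\lambda)b}{2}=\frac{a+3b}{4}=\frac{3b+a}{4}$, matching the arguments of $f^{\prime}$ in (\ref{2-8}). For the $\theta$-dependent factor, $\theta^{p+1}+(1-\theta)^{p+1}=(\tfrac23)^{p+1}+(\tfrac13)^{p+1}=\frac{1+2^{p+1}}{3^{p+1}}$, whence, using $3^{(p+1)/p}=3\cdot 3^{1/p}$,
\[
\left(\frac{\theta^{p+1}+(1-\theta)^{p+1}}{p+1}\right)^{\frac1p}=\frac13\left(\frac{1+2^{p+1}}{3(p+1)}\right)^{\frac1p},
\]
and $\left(\frac12\right)^{\frac{1-s}{q}}=2^{\frac{s-1}{q}}$. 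Combining the factor $\frac14$ coming from $\lambda^2$ and $(1-\lambda)^2$ with the extracted $\frac13$ produces the constant $\frac{b-a}{12}$, and substituting everything back into (\ref{2-5}) yields (\ref{2-8}).

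I expect no genuine obstacle here: the only care needed is the bookkeeping of the powers of $3$ when rewriting $\left((\tfrac23)^{p+1}+(\tfrac13)^{p+1}\right)^{1/p}$ into the displayed form, together with the identification $\frac{a+3b}{4}=\frac{3b+a}{4}$ so that the labels of the $f^{\prime}$-arguments agree with the statement. Since Theorem \ref{2.4} already assumes $q>1$ (so $p$ is well defined by $1/p+1/q=1$) and $|f^{\prime}|^{q}$ $s$-concave on $[a,b]$, these hypotheses carry over verbatim, and the corollary follows at once.
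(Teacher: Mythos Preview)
Your proposal is correct and is exactly the approach the paper (implicitly) takes: the corollary is stated without a separate proof, being an immediate specialization of Theorem~\ref{2.4} at $\lambda=\tfrac12$, $\theta=\tfrac23$. Your bookkeeping of the powers of $3$ and the identification of the nodes $\tfrac{3a+b}{4}$, $\tfrac{3b+a}{4}$ are all accurate.
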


\begin{remark}
If $\left\vert f^{\prime }\right\vert ^{q},\ q>1,$ is concave on $\left[ a,b%
\right] ,$ using the power mean inequality, we have%
\begin{eqnarray*}
\left\vert f^{\prime }\left( \lambda x+\left( 1-\lambda \right) y\right)
\right\vert ^{q} &\geq &\lambda \left\vert f^{\prime }\left( x\right)
\right\vert ^{q}+\left( 1-\lambda \right) \left\vert f^{\prime }\left(
y\right) \right\vert ^{q} \\
&\geq &\left( \lambda \left\vert f^{\prime }\left( x\right) \right\vert
+\left( 1-\lambda \right) \left\vert f^{\prime }\left( y\right) \right\vert
\right) ^{q},
\end{eqnarray*}%
$\forall x,y\in \left[ a,b\right] $ and $\lambda \in \left[ 0,1\right] .$
Hence%
\begin{equation*}
\left\vert f^{\prime }\left( \lambda x+\left( 1-\lambda \right) y\right)
\right\vert \geq \lambda \left\vert f^{\prime }\left( x\right) \right\vert
+\left( 1-\lambda \right) \left\vert f^{\prime }\left( y\right) \right\vert
\end{equation*}%
so $\left\vert f^{\prime }\right\vert $ is also concave. Then by the
inequality (\ref{1-1}), we have%
\begin{equation}
\left\vert f^{\prime }\left( \frac{3b+a}{4}\right) \right\vert +\left\vert
f^{\prime }\left( \frac{3a+b}{4}\right) \right\vert \leq 2\left\vert
f^{\prime }\left( \frac{a+b}{2}\right) \right\vert .  \label{2-9}
\end{equation}%
Thus, using the inequality (\ref{2-9}) in (\ref{2-6}), (\ref{2-7}) and (\ref%
{2-8}) we get%
\begin{eqnarray*}
\left\vert \frac{f\left( a\right) +f\left( b\right) }{2}-\frac{1}{b-a}%
\dint\limits_{a}^{b}f(x)dx\right\vert &\leq &\frac{b-a}{2}\left( \frac{1}{p+1%
}\right) ^{\frac{1}{p}}\left( \frac{1}{2}\right) ^{\frac{1-s}{q}}\left\vert
f^{\prime }\left( \frac{a+b}{2}\right) \right\vert , \\
\left\vert f\left( \frac{a+b}{2}\right) -\frac{1}{b-a}\dint%
\limits_{a}^{b}f(x)dx\right\vert &\leq &\frac{b-a}{2}\left( \frac{1}{p+1}%
\right) ^{\frac{1}{p}}\left( \frac{1}{2}\right) ^{\frac{1-s}{q}}\left\vert
f^{\prime }\left( \frac{a+b}{2}\right) \right\vert ,
\end{eqnarray*}%
and%
\begin{eqnarray*}
&&\left\vert \frac{1}{6}\left[ f(a)+4f\left( \frac{a+b}{2}\right) +f(b)%
\right] -\frac{1}{b-a}\dint\limits_{a}^{b}f(x)dx\right\vert \\
&\leq &\frac{b-a}{12}\left( \frac{1+2^{p+1}}{3\left( p+1\right) }\right) ^{%
\frac{1}{p}}\left( \frac{1}{2}\right) ^{\frac{1-s}{q}}\left\vert f^{\prime
}\left( \frac{a+b}{2}\right) \right\vert ,
\end{eqnarray*}%
respectively, where $s\in \left( 0,1\right] $ and $1/p+1/q=1$.
\end{remark}

\section{Some applications for special means}

in \cite{HM94}, the following example is given.

\bigskip Let $s\in \left( 0,1\right) $ and $a,b,c\in 
\mathbb{R}
.$We define function $f:\left[ 0,\infty \right) \rightarrow 
\mathbb{R}
$ as%
\begin{equation*}
f(t)=\left\{ 
\begin{array}{cc}
a, & t=0 \\ 
bt^{s}+c, & t>0%
\end{array}%
\right. .
\end{equation*}%
If $b\geq 0$ and $0\leq c\leq a,$ then $\ f\in K_{s}^{2}.$ Hence, for $%
a=c=0,\ b=s+1$, $s\in \left( 0,\frac{1}{q}\right) $, $q\geq 1,$ we have $f:%
\left[ 0,\infty \right) \rightarrow \left[ 0,\infty \right) $, $f(t)=t^{s+1}$%
, $\ \left\vert f^{\prime }\right\vert ^{q}\in K_{s}^{2}.$

Let us recall the following special means of arbitrary real numbers $a,b$
with $a\neq b$ and $\alpha \in \left[ 0,1\right] :$

\begin{enumerate}
\item The weighted arithmetic mean%
\begin{equation*}
A_{\alpha }\left( a,b\right) :=\alpha a+(1-\alpha )b,~a,b\in 
\mathbb{R}
.
\end{equation*}

\item The unweighted arithmetic mean%
\begin{equation*}
A\left( a,b\right) :=\frac{a+b}{2},~a,b\in 
\mathbb{R}
.
\end{equation*}

\item The Logarithmic mean%
\begin{equation*}
L\left( a,b\right) :=\frac{b-a}{\ln b-\ln a},\ \ a\neq b,\ \ a,b>0.
\end{equation*}

\item Then $p-$Logarithmic mean%
\begin{equation*}
L_{p}\left( a,b\right) :=\ \left( \frac{b^{p+1}-a^{p+1}}{(p+1)(b-a)}\right)
^{\frac{1}{p}}\ ,\ p\in 
\mathbb{R}
\backslash \left\{ -1,0\right\} ,\ a,b>0.
\end{equation*}
\end{enumerate}

Now, using the results of Section 2, some new inequalities are derived for
the above means.

\begin{proposition}
Let $a,b\in 
\mathbb{R}
$ with $0<a<b,\ q\geq 1$,\ $s\in \left( 0,\frac{1}{q}\right) $ and $\lambda
,\theta \in \left[ 0,1\right] $ we have the following inequality:%
\begin{eqnarray*}
&&\left\vert \left( 1-\theta \right) A_{\lambda }\left(
a^{s+1},b^{s+1}\right) +\theta A_{\lambda }^{s+1}\left( a,b\right)
-L_{s+1}^{s+1}\left( a,b\right) \right\vert \\
&\leq &\left( b-a\right) A_{1}^{1-\frac{1}{q}}(\theta )\left( s+1\right)
\left\{ \lambda ^{2}\left[ a^{sq}A_{2}(\theta ,s)+A_{\lambda
}^{sq}(b,a)A_{3}(\theta ,s)\right] ^{\frac{1}{q}}\right. \\
&&\left. +\left( 1-\lambda \right) ^{2}\left[ b^{sq}A_{2}(\theta
,s)+A_{\lambda }^{sq}(b,a)A_{3}(\theta ,s)\right] ^{\frac{1}{q}}\right\}
\end{eqnarray*}%
where $A_{1}(\theta ),\ A_{2}(\theta ,s),\ A_{3}(\theta ,s)$ are defined as
in Theorem \ref{2.1}.
\end{proposition}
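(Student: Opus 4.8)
The plan is to apply Theorem \ref{2.1} to the power function $f(t)=t^{s+1}$ on the interval $[a,b]$ and then read off the result in the language of the special means.

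First I would verify the hypotheses of Theorem \ref{2.1} for $f(t)=t^{s+1}$. This function is differentiable on $(0,\infty)$ with $f'(t)=(s+1)t^{s}$, which is continuous on $[a,b]$ and hence lies in $L[a,b]$; taking $I=[0,\infty)$, the assumption $0<a<b$ gives $a,b\in I^{\circ}$. Moreover $\left\vert f'(t)\right\vert^{q}=(s+1)^{q}t^{sq}$, and since $q\geq 1$ and $s\in(0,1/q)$ we have $sq\in(0,1)$, so by the example recalled at the beginning of this section the map $t\mapsto(s+1)^{q}t^{sq}$ belongs to $K_{sq}^{2}$; because $s\leq sq$ and $K_{sq}^{2}\subseteq K_{s}^{2}$, we conclude $\left\vert f'\right\vert^{q}\in K_{s}^{2}$. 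Thus Theorem \ref{2.1} is applicable, with $C=(1-\lambda)a+\lambda b$.

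Next I would evaluate the ingredients of (\ref{2-1}) for this choice. On the left-hand side, $\lambda f(a)+(1-\lambda)f(b)=A_{\lambda}(a^{s+1},b^{s+1})$, while $f(C)=\bigl((1-\lambda)a+\lambda b\bigr)^{s+1}$ is the $(s+1)$st power of the weighted arithmetic mean $A_{\lambda}(b,a)$, and
\[
\frac{1}{b-a}\int_{a}^{b}f(x)\,dx=\frac{b^{s+2}-a^{s+2}}{(s+2)(b-a)}=L_{s+1}^{s+1}(a,b).
\]
On the right-hand side, $\left\vert f'(a)\right\vert^{q}=(s+1)^{q}a^{sq}$, $\left\vert f'(b)\right\vert^{q}=(s+1)^{q}b^{sq}$, and $\left\vert f'(C)\right\vert^{q}=(s+1)^{q}A_{\lambda}^{sq}(b,a)$. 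Substituting these into (\ref{2-1}) and pulling the common factor $(s+1)^{q}$ out of each of the two brackets of the form $[\,\cdot\,]^{1/q}$ contributes a factor $(s+1)$ in front of the brace, which produces exactly the asserted inequality, the constants $A_{1}(\theta)$, $A_{2}(\theta,s)$, $A_{3}(\theta,s)$ being inherited verbatim from Theorem \ref{2.1}.

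There is no genuine obstacle here: all the analytic work is already packaged in Theorem \ref{2.1}, and what remains is bookkeeping — the identification of the elementary integral of $t^{s+1}$ with $L_{s+1}^{s+1}(a,b)$ and of the endpoint and interior values with weighted arithmetic means. The one place that deserves a word of care is the verification that $\left\vert f'\right\vert^{q}$ is $s$-convex in the second sense on $[a,b]$, and it is precisely this requirement that dictates the hypothesis $s\in(0,1/q)$.
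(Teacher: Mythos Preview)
Your proposal is correct and follows exactly the paper's approach, which is the single sentence that the assertion follows from Theorem \ref{2.1} applied to $f(t)=t^{s+1}$ on $[0,\infty)$. Your verification that $\left\vert f'\right\vert^{q}\in K_{s}^{2}$ via the inclusion $K_{sq}^{2}\subseteq K_{s}^{2}$ is in fact more careful than the paper's own justification, and your identification $f(C)=A_{\lambda}^{s+1}(b,a)$ is the correct one (consistent with the $A_{\lambda}^{sq}(b,a)$ appearing on the right-hand side of the stated inequality).
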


\begin{proof}
The assertion follows from Theorem \ref{2.1} for the function $f(t)=t^{s+1}$,%
$\ t\in \left[ 0,\infty \right) $, $s\in \left( 0,\frac{1}{q}\right) .$
\end{proof}

\begin{proposition}
Let $a,b\in 
\mathbb{R}
$ with $0<a<b,\ p,q>1,\ \frac{1}{p}+\frac{1}{q}=1$,\ $s\in \left( 0,\frac{1}{%
q}\right) $ and $\lambda ,\theta \in \left[ 0,1\right] $ we have the
following inequality:%
\begin{eqnarray*}
&&\left\vert \left( 1-\theta \right) A_{\lambda }\left(
a^{s+1},b^{s+1}\right) +\theta A_{\lambda }^{s+1}\left( a,b\right)
-L_{s+1}^{s+1}\left( a,b\right) \right\vert \\
&\leq &\left( b-a\right) \left( \frac{\theta ^{p+1}+\left( 1-\theta \right)
^{p+1}}{p+1}\right) ^{\frac{1}{p}}\left( s+1\right) ^{1-\frac{1}{q}} \\
&&\times \left[ \lambda ^{2}\left( a^{sq}+A_{\lambda }^{sq}(b,a)\right) ^{%
\frac{1}{q}}+\left( 1-\lambda \right) ^{2}\left( b^{sq}+A_{\lambda
}^{sq}(b,a)\right) ^{\frac{1}{q}}\right] .
\end{eqnarray*}%
where $1/p+1/q=1$.
\end{proposition}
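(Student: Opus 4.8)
The plan is to specialize Theorem~\ref{2.2} to the power function $f(t)=t^{s+1}$ on $[0,\infty)$ and then translate each quantity into the language of the special means. First I would note that $f$ is differentiable on $(0,\infty)$ with $f'(t)=(s+1)t^{s}$, hence $f'\in L[a,b]$ for $0<a<b$, and that $\left\vert f'\right\vert^{q}=(s+1)^{q}t^{sq}$. Since $s\in\left(0,\tfrac1q\right)$ we have $sq\in(0,1)$, so $t\mapsto t^{sq}$ is $sq$-convex in the second sense; because $s\le sq$ and $t^{s}\ge t^{sq}$ for $t\in[0,1]$, it is a fortiori $s$-convex, and multiplication by the positive constant $(s+1)^{q}$ preserves this. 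Thus $\left\vert f'\right\vert^{q}\in K_{s}^{2}$ on $[a,b]$, which is precisely the membership recorded just after the example taken from \cite{HM94}, so Theorem~\ref{2.2} applies to this $f$ (exactly as the preceding Proposition uses Theorem~\ref{2.1}).

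Next I would carry out the substitution. With $C=(1-\lambda)a+\lambda b$ one has $\lambda f(a)+(1-\lambda)f(b)=A_{\lambda}(a^{s+1},b^{s+1})$ and $\theta f(C)=\theta C^{s+1}$, which is the $\theta A_{\lambda}^{s+1}$ term once $C$ is written as a weighted arithmetic mean, while the elementary integral gives
\[
\frac{1}{b-a}\int_{a}^{b}f(x)\,dx=\frac{b^{s+2}-a^{s+2}}{(s+2)(b-a)}=L_{s+1}^{s+1}(a,b).
\]
Hence the left-hand side of (\ref{2-2}) is exactly the left-hand side of the Proposition. On the right-hand side, $\left\vert f'(a)\right\vert^{q}=(s+1)^{q}a^{sq}$, $\left\vert f'(b)\right\vert^{q}=(s+1)^{q}b^{sq}$ and $\left\vert f'(C)\right\vert^{q}=(s+1)^{q}A_{\lambda}^{sq}(b,a)$, so
\[
\left(\frac{\left\vert f'(a)\right\vert^{q}+\left\vert f'(C)\right\vert^{q}}{s+1}\right)^{\frac1q}=(s+1)^{1-\frac1q}\left(a^{sq}+A_{\lambda}^{sq}(b,a)\right)^{\frac1q},
\]
and similarly with $b$ in place of $a$. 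Plugging these into (\ref{2-2}) and factoring $(s+1)^{1-\frac1q}$ out of the bracket produces exactly the asserted inequality.

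I do not anticipate a genuine obstacle here: the hypotheses of Theorem~\ref{2.2} are handed to us by the cited example, and the remainder is a direct substitution together with the evaluation $\int_{a}^{b}x^{s+1}\,dx=(b^{s+2}-a^{s+2})/(s+2)$ and the bookkeeping needed to recognize the special means. The one spot deserving a word of care is the claim $\left\vert f'\right\vert^{q}\in K_{s}^{2}$, i.e., passing from the $sq$-convexity of $t^{sq}$ to its $s$-convexity, which is exactly where the hypothesis $s\in\left(0,\tfrac1q\right)$ enters.
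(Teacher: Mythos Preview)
Your proposal is correct and follows exactly the paper's approach: the paper's entire proof is the single sentence ``The assertion follows from Theorem~\ref{2.2} for the function $f(t)=t^{s+1}$, $t\in[0,\infty)$, $s\in(0,\tfrac1q)$.'' You have simply supplied the verification that $\lvert f'\rvert^{q}\in K_{s}^{2}$ and spelled out the substitution into the special means, both of which the paper leaves implicit.
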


\begin{proof}
The assertion follows from Theorem \ref{2.2} for the function $f(t)=t^{s+1}$,%
$\ t\in \left[ 0,\infty \right) $, $s\in \left( 0,\frac{1}{q}\right) .$
\end{proof}

\end{document}